\documentclass[a4paper,12pt]{amsart}
\usepackage{etex}
\usepackage{fullpage}
\usepackage{txfonts}
\usepackage{latexsym}
\usepackage{amssymb}
\usepackage{stmaryrd}
\usepackage{mathrsfs}
\usepackage[sans]{dsfont}
\usepackage[all]{xy}
\usepackage{proof}
\usepackage{amsaddr}
\def\limp{\Rightarrow}
\def\liff{\Leftrightarrow}
\def\dand{\mathbin{\dot{{\land}}}}
\def\dor{\mathbin{\dot{{\lor}}}}
\def\dimp{\mathbin{\dot{{\to}}}}
\def\dbot{\dot{\bot}}
\def\dtop{\dot{\top}}
\def\dbigvee{\mathbin{\dot{{\bigvee}}}}
\def\dbigwedge{\mathbin{\dot{{\bigwedge}}}}

\def\<{\langle}
\def\>{\rangle}

\def\Pow{\mathfrak{P}}

\def\op{\mathrm{op}}
\def\id{\mathrm{id}}

\def\<{\langle}
\def\>{\rangle}
\def\P{\mathsf{P}}

\def\Set{\mathbf{Set}}
\def\Pos{\mathbf{Pos}}
\def\HA{\mathbf{HA}}
\def\BA{\mathbf{BA}}

\let\phi=\varphi
\def\tphi{\tilde{\phi}}
\def\pullbackbox#1{\hbox to#1{\vbox to#1{\hbox to#1{\hfil}\vfil\hrule}\hskip-0.4pt\vrule}}
\def\pullback#1{\hbox to 0pt{\vbox to 0pt{\pullbackbox{#1}\vss}\hss}}

\def\tr{\textit{tr}}
\def\sem#1{\llbracket#1\rrbracket}

\def\bigsem#1{\bigl\llbracket#1\bigr\rrbracket}
\def\Bigsem#1{\Bigl\llbracket#1\Bigr\rrbracket}
\def\csem#1{\langle\mskip-4mu\langle#1\rangle\mskip-4mu\rangle}

\def\bigcsem#1{\bigl\langle\mskip-4mu\bigl\langle#1%
\bigr\rangle\mskip-4mu\bigr\rangle}
\def\Bigcsem#1{\Bigl\langle\mskip-4mu\Bigl\langle#1%
\Bigr\rangle\mskip-4mu\Bigr\rangle}

\def\ent{\vdash}
\def\tnent{\mathrel{{\dashv}{\vdash}}}
\def\A{\mathscr{A}}
\def\B{\mathscr{B}}
\def\cle{\preccurlyeq}

\def\bigmeet{\bigcurlywedge}

\def\Powup{\Pow_{\uparrow}}
\def\Powfin{\Pow_{\text{fin}}}
\def\ds{\displaystyle}
\outer\long\def\COUIC#1{}
\newtheorem{theorem}{Theorem}[section]
\newtheorem{proposition}[theorem]{Proposition}
\newtheorem{fact}[theorem]{Fact}
\newtheorem{lemma}[theorem]{Lemma}
\newtheorem{corollary}[theorem]{Corollary}
\theoremstyle{definition}
\newtheorem{definition}[theorem]{Definition}
\newtheorem{notation}[theorem]{Notation}
\newtheorem{remark}[theorem]{Remark}
\newtheorem{remarks}[theorem]{Remarks}

\begin{document}
\title{Implicative algebras II:\\
  completeness w.r.t. Set-based triposes}
\author{Alexandre Miquel}
\address{Instituto de Matem\'atica y Estad\'istica
  Prof. Ing. Rafael Laguardia\\
  Facultad de Ingenier\'ia -- Universidad de la Rep\'ublica\\
  Julio Herrera y Reissig 565 -- Montevideo C.P. 11300 -- URUGUAY}
\date{November, 2020}

\begin{abstract}
  We prove that all $\Set$-based triposes are implicative triposes.
\end{abstract}
\maketitle

\section{Introduction}

In~\cite{Miq20}, we introduced the notion of \emph{implicative
  algebra}, a simple algebraic structure that is intended to factorize
the model constructions underlying forcing and realizability, both in
intuitionistic and classical logic.
We showed that this algebraic structure induces a large class of
($\Set$-based) triposes---the \emph{implicative triposes}---, that
encompasses all (intuitionistic and classical) forcing triposes, all
classical realizability triposes (both in the sense of
Streicher~\cite{Str13} and Krivine~\cite{Miq20}) as well as all the
intuitionistic realizability triposes induced by partial combinatory
algebras (in the style of Hyland, Johnstone and Pitts~\cite{HJP80}).

The aim of this paper is to prove that the class of implicative
triposes actually encompasses all $\Set$-based triposes, in the sense
that:
\begin{theorem}\label{th:Thm}
  Each $\Set$-based tripos is (isomorphic to) an implicative tripos.
\end{theorem}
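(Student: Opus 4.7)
The plan is to associate, with any given $\Set$-based tripos $\P$, an implicative algebra $\A$ whose induced implicative tripos $\P_\A$ is isomorphic to $\P$. The starting point is the classical fact that every $\Set$-based tripos admits a (weak) generic predicate: a set $\Sigma$ and $\tr \in \P(\Sigma)$ such that any $\phi \in \P(I)$ equals $f^*\tr$ for some $f : I \to \Sigma$. This set $\Sigma$ serves as the backbone of the carrier of $\A$: every element of $\A$ should, via the generic property, correspond to an element of $\Sigma$, so it is natural to take $\A$ essentially as $\Sigma$ (or a suitable enlargement/quotient) equipped with the preorder $a \cle b$ iff $\tr(a) \ent \tr(b)$ holds in the Heyting algebra $\P(1)$ after the appropriate reindexing.

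Next, I would lift the operations of the tripos through $\tr$ to equip $\A$ with the remaining structure of an implicative algebra. Set-indexed meets $\bigmeet_{i\in I} a_i$ are obtained by applying the generic property to a predicate in $\P(\Sigma)$ built from the $a_i$ via the tripos's universal quantifier, while implication $a \to b$ is chosen so that $\tr(a \to b)$ matches the Heyting implication of $\tr(a)$ and $\tr(b)$ (again after reindexing). The separator $S$ is then defined as the set of those $a \in \A$ for which the associated predicate in $\P(1)$ is $\P$-valid.

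The main obstacle, I expect, is twofold. First, verifying that the so-defined $\to$ truly satisfies the implicative axioms, in particular the infinite distributivity $a \to \bigmeet_i b_i = \bigmeet_i (a \to b_i)$ and the contravariance in the first argument; these will ultimately rely on the Beck--Chevalley conditions for the tripos's quantifiers combined with the fact that $\tr$ codes \emph{every} predicate. Second, showing that the separator $S$ contains the elements representing the combinators $\mathsf{K}$ and $\mathsf{S}$ (and whatever extra closure axioms implicative algebras demand); this reduces to validity in $\P$ of the corresponding higher-order formulas, which is automatic because every tripos soundly interprets intuitionistic higher-order logic. Some care is also required because $\tr$ is only \emph{weakly} generic: lifting operations through $\tr$ does not return a canonical element but an equivalence class, so one must either pass to a skeleton of $\A$ or work modulo a suitable choice.

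Finally, once $\A$ is shown to be an implicative algebra, the isomorphism $\P \cong \P_\A$ is essentially formal: by construction, the identity map on $\A$ serves as a generic predicate for $\P_\A$, and sending a class $[f] \in \P_\A(I) = \A^I/{\sim}$ to $f^*\tr \in \P(I)$ yields the desired natural bijection. Preservation of entailment follows from the definition of the separator $S$ as the preimage of $\P$-validity under $\tr$, so the bijection upgrades to an isomorphism of triposes.
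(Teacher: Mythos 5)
Your overall strategy---extract a generic predicate $\tr_{\Sigma}\in\P\Sigma$, read off implication, arbitrary meets and a separator from it, then check that the induced implicative tripos is isomorphic to $\P$---is the same as the paper's, and your treatment of the separator, of $\mathbf{K}^{\A}$ and $\mathbf{S}^{\A}$, and of the final isomorphism all match. But there is a genuine gap at the heart of the construction, namely the choice of the carrier $\A$. You propose to take $\A$ ``essentially as $\Sigma$'' (or a skeleton or quotient thereof), preordered by $a\cle b$ iff $\sem{a}_{\_\in1}\le\sem{b}_{\_\in1}$ in $\P 1$, with $\to$ and $\bigmeet$ obtained by lifting the tripos operations through $\tr_{\Sigma}$. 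The operations one gets this way are exactly the codes $({\dimp})\in\Sigma^{\Sigma\times\Sigma}$ and $\dbigwedge\in\Sigma^{\Pow(\Sigma)}$ that the paper constructs, and---as the paper explicitly warns---they satisfy \emph{no equations on the nose}: in general $\xi\dimp\xi\neq\dtop$ and ${\dbigwedge}\{\xi\}\neq\xi$, whereas an implicative structure demands the literal identity $a\to\bigmeet_{b\in B}b=\bigmeet_{b\in B}(a\to b)$ in an honest complete lattice. Neither of your proposed fixes repairs this. Quotienting $\Sigma$ by the preorder you describe collapses $\Sigma$ onto $\P1$ (the decoding map $\Sigma\to\P1$ is surjective), after which genericity is destroyed: a family $X\to\P1$ does not determine an element of $\P X$, precisely because a tripos is not determined by its fibre over $1$. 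Passing to a skeleton, on the other hand, leaves the implicative axioms holding only up to the tripos's equivalence, not as identities, so you never actually produce an implicative structure.

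The missing idea---and the technical core of the paper---is to \emph{enlarge} $\Sigma$ by a graph-model-like inductive construction rather than to shrink or reorder it: one forms the set of atoms $\A_0$ as the least solution of $\A_0=\Sigma+\Pow(\Sigma)\times\A_0$, equips it with a syntactic preorder, and takes $\A:=\Powup(\A_0)$ ordered by \emph{reverse inclusion}. Meets in $\A$ are then literal unions, and $a\to b:=\bigl\{s\mapsto\beta:s\in\tphi_0(a)^{\subseteq},\ \beta\in b\bigr\}$, so that contravariance and the infinite distributivity law hold strictly by pure set-theoretic computation, with no appeal to the tripos at all. Only afterwards does one prove---using Beck--Chevalley, as you correctly anticipated---that the conversion map $\phi:\A\to\Sigma$ makes $\tr_{\A}:=\P\phi(\tr_{\Sigma})$ a new generic predicate for which $\to$ and $\bigmeet$ genuinely code Heyting implication and universal quantification. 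Without some such strictification device, your argument does not go through.
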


For that, we first recall some notions about triposes and
implicative algebras.

\subsection{$\Set$-based triposes}
\label{ss:SetBasedTriposes}
In what follows, we write:
\begin{itemize}
\item $\Set$ the category of sets equipped with all functions;
\item $\Pos$ the category of posets equipped with monotonic
  functions;
\item $\HA$ the category of Heyting algebras equipped with
  the corresponding morphisms.
\end{itemize}
In the category~$\Set$, we write:
\begin{itemize}
\item $1$ the terminal object (i.e. a fixed singleton);
\item $1_X:X\to 1$ the unique map from a given set~$X$ to~$1$;
\item $X\times Y$ the Cartesian product of two sets~$X$ and $Y$,
  and
\item $\pi_{X,Y}:X\times Y\to X$ and $\pi'_{X,Y}:X\times Y\to Y$
  the associated projections.
\item Finally, given two functions $f:Z\to X$ and $g:Z\to Y$,
  we write $\<f,g\>:Z\to X\times Y$ the unique function such that
  $\pi_{X,Y}\circ\<f,g\>=f$ and $\pi'_{X,Y}\circ\<f,g\>=g$.
\end{itemize}

\begin{definition}[$\Set$-based triposes]\label{d:Tripos}
  A \emph{$\Set$-based tripos} is a functor $\P:\Set^{\op}\to\HA$ that
  fulfills the following three conditions:
  \begin{enumerate}
  \item For each map $f:X\to Y$ ($X,Y\in\Set$), the corresponding map
    $\P{f}:\P{Y}\to\P{X}$ has left and right adjoints in~$\Pos$, that
    are monotonic maps $\exists{f},\forall{f}:\P{X}\to\P{Y}$ such that
    $$\begin{array}{rcl}
      \exists{f}(p)\le q&\liff&p\le\P{f}(q) \\[3pt]
      q\le\forall{f}(p)&\liff&\P{f}(q)\le p \\
    \end{array}\eqno(\text{for all}~p\in\P{X},~q\in\P{Y})$$
  \item Beck-Chevalley condition. Each pullback square in $\Set$ (on
    the left-hand side) induces the following two commutative diagrams
    in~$\Pos$ (on the right-hand side):
    $$\begin{array}{@{}c@{}}
      \xymatrix{
        X\pullback{6pt}\ar[r]^{f_1}\ar[d]_{f_2}& X_1\ar[d]^{g_1} \\
        X_2\ar[r]_{g_2} & Y \\
      }\\
    \end{array}\qquad{\limp}\qquad
    \begin{array}{c@{\qquad}c}
      \xymatrix{
        \P{X}\ar[r]^{\exists{f_1}}& \P{X_1} \\
        \P{X_2}\ar[u]^{\P{f_2}}\ar[r]_{\exists{g_2}} &
        \P{Y}\ar[u]_{\P{g_1}} \\
      } &
      \xymatrix{
        \P{X}\ar[r]^{\forall{f_1}}& \P{X_1} \\
        \P{X_2}\ar[u]^{\P{f_2}}\ar[r]_{\forall{g_2}} &
        \P{Y}\ar[u]_{\P{g_1}} \\
      }\\
    \end{array}$$
    That is:\quad
    $\exists{f_1}\circ\P{f_2}~=~\P{g_1}\circ\exists{g_2}$\quad
    and\quad $\forall{f_1}\circ\P{f_2}~=~\P{g_1}\circ\forall{g_2}$.
  \item The functor $\P:\Set^{\op}\to\HA$ has a \emph{generic
    predicate}, that is: a predicate $\tr_{\Sigma}\in\P\Sigma$ (for
    some set~$\Sigma$) such that for all sets~$X$, the following map
    is surjective:
    $$\begin{array}{r@{~{}~}c@{~{}~}l}
      \Sigma^X&\to&\P{X} \\
      \sigma&\mapsto&\P\sigma(\tr_{\Sigma}) \\
    \end{array}$$
  \end{enumerate}
\end{definition}

\begin{remarks}
  (1)~Given a map $f:X\to Y$, the left and right adjoints
  $\exists{f},\forall{f}:\P{X}\to\P{Y}$ of the substitution map
  $\P{f}:\P{Y}\to\P{X}$ are always monotonic functions (due to the
  adjunction), but in general they are not morphisms of Heyting
  algebras.
  Moreover, both correspondences $f\mapsto\exists{f}$ and
  $f\mapsto\forall{f}$ are functorial, in the sense that
  $$\begin{array}{c@{\qquad\qquad}c}
    \exists(g\circ f)~=~\exists{g}\circ\exists{f}&
    \exists\id_X~=~\id_{\P{X}}\\
    \forall(g\circ f)~=~\forall{g}\circ\forall{f}&
    \forall\id_X~=~\id_{\P{X}}\\
  \end{array}$$
  for all sets~$X$, $Y$, $Z$ and for all maps $f:X\to Y$ and
  $g:Y\to Z$.
  So that we can see~$\exists$ and $\forall$ as covariant functors
  from~$\Set$ to~$\Pos$, whose action on sets is given by
  $\exists{X}=\forall{X}=\P{X}$.\par
  (2)~When defining the notion of tripos, some authors~\cite{Pit02}
  require that the Beck-Chevalley condition hold only for the pullback
  squares of the form 
  $$\begin{array}{@{}c@{}}
    \xymatrix@C=36pt{
      Z\times X\pullback{6pt}
      \ar[r]^{\pi_{Z,X}}\ar[d]_{f\times\id_X}&Z\ar[d]^{f} \\
      Z'\times X\ar[r]_{\pi_{Z',X}}&Z'\\
    }
  \end{array}\eqno(\text{in}~\Set)$$
  Here, we follow~\cite{HJP80,Miq20} by requiring that the
  Beck-Chevalley condition hold more generally for all pullback
  squares in~$\Set$.\par
  (3)~The meaning of the generic predicate $\tr_{\Sigma}\in\P\Sigma$
  will be explained in Section~\ref{ss:GenPred}.
\end{remarks}

Let us also recall that:
\begin{definition}[Isomorphism of triposes]
  Two triposes $\P,\P':\Set^{\op}\to\HA$ are \emph{isomorphic} when
  there is a natural isomorphism $\phi:\P\limp\P'$.
\end{definition}

\begin{remark}
  By a natural isomorphism $\phi:\P\limp\P'$, we mean any family of
  isomorphisms $\phi_X:\P{X}\to\P'{X}$ (indexed by $X\in\Set$) such
  that for all maps $f:X\to Y$ ($X,Y\in\Set$), the following diagram
  commutes:
  $$\xymatrix{
    X\ar[d]_f & \P{X}\ar[r]^{\phi_X}_{\sim} & \P'{X} \\
    Y & \P{Y}\ar[u]^{\P{f}}\ar[r]_{\phi_Y}^{\sim} &
    \P'{Y}\ar[u]_{\P'{f}} \\
  }$$
  Note that here, the notion of isomorphism can be taken indifferently
  in the sense of $\HA$ or $\Pos$, since a map $\phi_X:\P{X}\to\P'{X}$
  is an isomorphism of Heyting algebras if and only if it is an
  isomorphism between the underlying posets.
\end{remark}

To conclude this presentation of triposes, we recall a few facts about
left and right adjoints:
\begin{lemma}
  For all maps $f:X\to Y$ and for all predicates $p,p'\in\P{X}$, we
  have:
  $$\begin{array}{l@{\qquad\qquad}l}
    \exists{f}(p\lor p')~=~\exists{f}(p)\lor\exists{f}(p')&
    \exists{f}(\bot_X)~=~\bot_Y \\[3pt]
    \forall{f}(p\land p')~=~\forall{f}(p)\land\forall{f}(p')&
    \forall{f}(\top_X)~=~\top_Y \\
  \end{array}$$
\end{lemma}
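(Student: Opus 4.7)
The plan is to exploit the adjunctions $\exists{f}\dashv\P{f}\dashv\forall{f}$ between the posets $\P{X}$ and $\P{Y}$, together with the general principle that left adjoints preserve whichever colimits exist and right adjoints preserve whichever limits exist. Here the relevant colimits in $\P{X}$ are binary joins and the bottom element (preserved by $\exists{f}$), while the corresponding limits are binary meets and the top element (preserved by $\forall{f}$). No use of Beck--Chevalley or of the generic predicate is required.

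Concretely, for the first equation I would verify that $\exists{f}(p\lor p')$ and $\exists{f}(p)\lor\exists{f}(p')$ satisfy the same universal property in~$\P{Y}$. For every $q\in\P{Y}$, chaining the adjunction and the defining property of binary joins, one has
$$\exists{f}(p\lor p')\le q
\;\liff\; p\lor p'\le\P{f}(q)
\;\liff\; p\le\P{f}(q)\text{ and }p'\le\P{f}(q),$$
and, by a second application of the adjunction to each conjunct, this is equivalent to $\exists{f}(p)\le q$ and $\exists{f}(p')\le q$, hence to $\exists{f}(p)\lor\exists{f}(p')\le q$. Since the two elements are below the same $q$'s, they are equal. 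The bottom case is the nullary analogue: $\exists{f}(\bot_X)\le q$ iff $\bot_X\le\P{f}(q)$, which always holds, so $\exists{f}(\bot_X)$ lies below every element of $\P{Y}$ and therefore equals $\bot_Y$.

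The two equations for $\forall{f}$ are dual, obtained by reversing all inequalities and swapping the roles of join/bottom with meet/top; the arguments transpose verbatim using the right-hand adjunction $\P{f}(q)\le p\liff q\le\forall{f}(p)$ together with the universal property of binary meets in~$\P{X}$.

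There is really no ``hard part'' here: the proof is purely formal manipulation of the adjunctions provided by clause~(1) of Definition~\ref{d:Tripos}, and the only ingredients used beyond adjointness are the universal characterisations of $\lor$, $\land$, $\bot$, $\top$ in the Heyting algebras $\P{X}$ and $\P{Y}$. The slight care to take is simply to notice that one does \emph{not} need $\P{f}$ to be a Heyting morphism to perform the computation; monotonicity together with the universal properties suffice.
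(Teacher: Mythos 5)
Your proof is correct and follows essentially the same route as the paper's: both establish $\exists f(p\lor p')=\exists f(p)\lor\exists f(p')$ by showing the two sides lie below exactly the same $q\in\P Y$ via the adjunction, handle $\bot$ as the nullary case, and dispatch the $\forall f$ equations by duality. The only cosmetic difference is that the paper derives $\exists f(\bot_X)\le\bot_Y$ directly from $\bot_X\le\P f(\bot_Y)$ rather than quantifying over all $q$, which is the same argument.
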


\begin{proof}
  Let us treat the case of $\exists{f}$.
  For all $q\in\P{Y}$, we have
  $$\begin{array}{r@{\quad}c@{\quad}l}
    \exists{f}(p\lor p')\le q
    &\text{iff}& p\lor p'\le\P{f}(q) \\
    &\text{iff}& p\le\P{f}(q)~{}~\text{and}~{}~p'\le\P{f}(q)\\
    &\text{iff}& \exists{f}(p)\le q~{}~\text{and}~{}~
    \exists{f}(p')\le q\\ 
    &\text{iff}& \exists{f}(p)\lor\exists{f}(p')\le q\\
  \end{array}$$
  hence $\exists{f}(p\lor p')=\exists{f}(p)\lor\exists{f}(p')$.
  Moreover, we have $\bot_X\le\P{f}(\bot_Y)$, hence
  $\exists{f}(\bot_X)\le\bot_Y$, and thus
  $\exists{f}(\bot_X)=\bot_Y$.
  The proofs of the corresponding properties for $\forall{f}$
  proceed dually.
\end{proof}

\begin{lemma}\label{l:SimplAdj}
  Given a map $f:X\to Y$:
  \begin{enumerate}
  \item If $f$ has an inverse (i.e.\ $f$ is bijective), then
    $\exists{f}$ and $\forall{f}$ are the inverse of $\P{f}$:
    $$\exists{f}~=~\forall{f}~=~\P{f^{-1}}~=~(\P{f})^{-1}\,.$$
  \item If $f$ has a right inverse, then $\exists{f}$ and~$\forall{f}$
    are left inverses of $\P{f}$:
    $$\exists{f}\circ\P{f}~=~\forall{f}\circ\P{f}~=~\id_{\P{Y}}\,.$$
  \item If $f$ has a left inverse, then $\exists{f}$ and~$\forall{f}$
    are right inverses of $\P{f}$:
    $$\P{f}\circ\exists{f}~=~\P{f}\circ\forall{f}~=~\id_{\P{X}}\,.$$
  \end{enumerate}
\end{lemma}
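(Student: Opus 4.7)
The plan is to deduce everything from the triangle identities of the double adjunction $\exists f \dashv \P f \dashv \forall f$. Specifically, I will first establish, purely from the unit/counit inequalities of Definition~\ref{d:Tripos}(1), the ``sandwich'' equalities
\[
  \P f \circ \exists f \circ \P f \;=\; \P f \;=\; \P f \circ \forall f \circ \P f.
\]
Each one is a two-line argument: the counit $\exists f \circ \P f \le \id_{\P Y}$ yields $\P f \circ \exists f \circ \P f \le \P f$ by monotonicity of $\P f$, while the unit $\id_{\P X} \le \P f \circ \exists f$ evaluated at $\P f(q)$ yields the reverse inequality; the $\forall$-version is dual (unit and counit swapped).

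For part~(2), let $g:Y\to X$ satisfy $f\circ g=\id_Y$. Contravariant functoriality of $\P$ gives $\P g\circ\P f = \id_{\P Y}$, so $\P f$ is a split monomorphism in $\Pos$. Precomposing the sandwich identity $\P f\circ\exists f\circ\P f = \P f$ with $\P g$ on the left collapses both outer $\P f$'s and leaves $\exists f\circ\P f = \id_{\P Y}$; the same manipulation applied to the $\forall$-sandwich identity gives $\forall f\circ\P f = \id_{\P Y}$. For part~(3), let $g:Y\to X$ satisfy $g\circ f = \id_X$, so that $\P f\circ\P g = \id_{\P X}$ and $\P f$ is a split epimorphism. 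Now postcomposing each sandwich identity with $\P g$ on the right kills the two outer $\P f$'s and yields $\P f\circ\exists f = \id_{\P X}$ and $\P f\circ\forall f = \id_{\P X}$.

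Finally, part~(1) is a direct consequence of (2) and (3): a bijection $f$ admits $f^{-1}$ as simultaneously a left and right inverse, so $\exists f$ becomes a two-sided inverse of $\P f$, and likewise for $\forall f$. Uniqueness of two-sided inverses then forces $\exists f=\forall f=(\P f)^{-1}$, and contravariant functoriality gives $(\P f)^{-1}=\P(f^{-1})$. I expect no serious obstacle—the whole lemma is a formal diagram chase from the adjunction data—so the main point of care is to keep pre- versus post-composition straight when extracting identities from the sandwich equalities, and to remember that contravariance of $\P$ swaps the roles of ``left'' and ``right'' inverse when passing from $f$ to $\P f$.
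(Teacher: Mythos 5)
Your proof is correct, but it follows a genuinely different route from the paper's. The paper argues pointwise in each case: for (1) it uses directly that $\P{f}$ is an order-isomorphism to rewrite the adjunction condition $\exists{f}(p)\le q\iff p\le\P{f}(q)$ as $(\P{f})^{-1}(p)\le q$; for (2) it notes that $\P{g}\circ\P{f}=\id_{\P{Y}}$ makes $\P{f}$ an order-embedding and then reads off $\exists{f}\circ\P{f}=\id_{\P{Y}}$ from the adjunction; and for (3) it invokes the \emph{functoriality of $\exists$ and $\forall$} (the identities $\exists(g\circ f)=\exists{g}\circ\exists{f}$ and $\forall(g\circ f)=\forall{g}\circ\forall{f}$ stated in the remarks after Definition~\ref{d:Tripos}) to conclude that $\exists{f}$ and $\forall{f}$ are order-embeddings, and again reads off the result from the adjunction. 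Your argument instead stays entirely at the level of the unit/counit inequalities and the contravariant functoriality of $\P$ alone: the sandwich identities $\P{f}\circ\exists{f}\circ\P{f}=\P{f}=\P{f}\circ\forall{f}\circ\P{f}$ are exactly the poset form of the triangle identities, and cancelling the split mono (resp.\ epi) $\P{f}$ against $\P{g}$ gives (2) (resp.\ (3)), with (1) then following from uniqueness of two-sided inverses. This buys you independence from the functoriality of the adjoints, which the paper asserts but does not prove, so your proof is in that sense more self-contained; the paper's version is more concrete and gives (1) without routing through the other two parts. Both are complete; the only cosmetic quibble is your phrase ``collapses both outer $\P{f}$'s,'' which is loose (in each sandwich equation only the $\P{f}$ adjacent to $\P{g}$ on each side of the equality is cancelled), but the computation you intend is unambiguous and correct.
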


\begin{proof}
  (1)~If $f:X\to Y$ is bijective, then for all $p\in\P{X}$ and
  $q\in\P{Y}$, we have
  \begin{itemize}
  \item $\exists{f}(p)\le q
    ~{}~\text{iff}~{}~p\le\P{f}(q)
    ~{}~\text{iff}~{}~(\P{f})^{-1}(p)\le q$,\quad
    hence:\quad $\exists{f}=(\P{f})^{-1}(p)=\P{f^{-1}}(p).$
  \item $q\le\forall{f}(p)
    ~{}~\text{iff}~{}~\P{f}(q)\le p
    ~{}~\text{iff}~{}~q\le(\P{f})^{-1}(p)$,\quad
    hence:\quad $\forall{f}=(\P{f})^{-1}(p)=\P{f^{-1}}(p).$
  \end{itemize}
  (2)~Let $g:Y\to X$ such that $f\circ g=\id_Y$.
  By functoriality, we get $\P{g}\circ\P{f}=\id_{\P{Y}}$, hence the
  map $\P{f}:\P{Y}\to\P{X}$ is an embedding of posets.
  For all $q,q'\in\P{Y}$, we thus have:
  \begin{itemize}
  \item $\exists{f}(\P{f}(q))\le q'
    ~{}~\text{iff}~{}~\P{f}(q)\le\P{f}(q')
    ~{}~\text{iff}~{}~q\le q'$,\quad
    hence\quad $\exists{f}\circ\P{f}=\id_{\P{Y}}$.
  \item $q'\le\forall{f}(\P{f}(q))
    ~{}~\text{iff}~{}~\P{f}(q')\le\P{f}(q)
    ~{}~\text{iff}~{}~q'\le q$,\quad
    hence\quad $\forall{f}\circ\P{f}=\id_{\P{Y}}$.
  \end{itemize}
  (3)~Let $g:Y\to X$ such that $g\circ f=\id_X$.
  By functoriality, we get
  $\forall{g}\circ\forall{f}=\exists{g}\circ\exists{f}=\id_{\P{X}}$,
  hence $\exists{f},\forall{f}:\P{X}\to\P{Y}$ are embeddings of
  posets.
  For all $p,p'\in\P{X}$, we thus have:
  \begin{itemize}
  \item $p'\le\P{f}(\exists{f}(p))
    ~{}~\text{iff}~{}~\exists{f}(p')\le\exists{f}(p)
    ~{}~\text{iff}~{}~p'\le p$,\quad
    hence\quad $\P{f}\circ\exists{f}=\id_{\P{X}}$.
  \item $\P{f}(\forall{f}(p))\le p'
    ~{}~\text{iff}~{}~\forall{f}(p)\le\forall{f}(p')
    ~{}~\text{iff}~{}~p\le p'$,\quad
    hence\quad $\P{f}\circ\forall{f}=\id_{\P{X}}$.\qedhere
  \end{itemize}
\end{proof}

\subsection{Implicative algebras}\label{ss:ImpAlg}
Recall that:
\begin{itemize}
\item An \emph{implicative structure} is a complete (meet-semi)lattice
  $(\A,{\cle})$ equipped with a binary operation $({\to}):\A^2\to\A$
  such that:
  \begin{enumerate}
  \item[(1)] If $a'\cle a$ and $b\cle b'$,\ \ then\ \
    $(a\to b)\cle(a'\to b')$.
  \item[(2)] For all $a\in\A$ and $B\subseteq\A$, we have:\quad
    $\ds a\to\bigmeet_{b\in B}\!\!b~=~\bigmeet_{b\in B}\!(a\to b)$.
  \end{enumerate}
\item A \emph{separator} of an implicative structure
  $(\A,{\cle},{\to})$ is a subset $S\subseteq\A$ such that:
  \begin{enumerate}
  \item[(1)] If $a\in S$ and $a\cle a'$, then $a'\in S$.
  \item[(2)] $\bigmeet_{a,b\in\A}(a\to b\to c)
    ~{}~({=}~\mathbf{K}^{\A})~\in~S$\quad and\\
    $\bigmeet_{a,b,c\in\A}((a\to b\to c)\to(a\to b)\to a\to c)
    ~{}~({=}~\mathbf{S}^{\A})~\in~S$.
  \item[(3)] If $(a\to b)\in S$ and $a\in S$, then $b\in S$.
  \end{enumerate}
  Moreover, we say that a separator $S\subseteq\A$ is
  \emph{classical} when
  \begin{enumerate}
  \item[(4)] $\bigmeet_{a,b\in\A}(((a\to b)\to a)\to a)
    ~{}~({=}~\mathbf{cc}^{\A})~\in~S$.
  \end{enumerate}
  \medbreak
\item An \emph{implicative algebra} is a quadruple
  $(\A,{\cle},{\to},S)$ where $(\A,{\cle},{\to})$ is an implicative
  structure and where $S\subseteq\A$ is a separator.
  An implicative algebra is \emph{classical} when the underlying
  separator $S\subseteq\A$ is classical.
\end{itemize}

\bigbreak
In~\cite[\S4]{Miq20}, we have seen that each implicative algebra
$(\A,{\cle},{\to},S)$ induces a $\Set$-based tripos
$\P:\Set^{\op}\to\HA$ that is defined as follows:
\begin{itemize}
\item For each set $X$, the corresponding Heyting algebra $\P{X}$ is
  defined as the poset reflection of the preordered set
  $(\A^X,\ent_{S[X]})$ whose preorder $\ent_{S[X]}$ is given by
  $$a\ent_{S[X]}b\quad\liff\quad
  \bigmeet_{x\in X}(a_x\to b_x)~\in~S
  \eqno(\text{for all}~a,b\in\A^X)$$
  The quotient $\A^X/{\tnent_{S[X]}}~({=}~\P{X})$ is also written
  $\A^X/S[X]$.
\item For each map $f:X\to Y$, the corresponding substitution map
  $\P{f}:\P{Y}\to\P{X}$ is the (unique) morphism of Heyting algebras
  that factors the map $\A^f:\A^Y\to\A^X$ through the quotients
  $\P{Y}=\A^Y/S[Y]$ and $\P{X}=\A^X/S[X]$.
\end{itemize}
The aim of this paper is thus to show that all $\Set$-based triposes
are of this form.

\section{Anatomy of a $\Set$-based tripos}

The results presented in this section are essentially taken
from~\cite{HJP80,Pit81}.

\subsection{The generic predicate}
\label{ss:GenPred}
From now on, we work with a fixed tripos $\P:\Set^{\op}\to\HA$.

From the definition, $\P$ has a \emph{generic predicate}, that is: a
predicate $\tr_{\Sigma}\in\P\Sigma$ (for some set $\Sigma$) such that for
each set~$X$, the corresponding `decoding map' is surjective:
$$\begin{array}{r@{~{}~}c@{~{}~}l}
  \sem{\_}_X~:~\Sigma^X&\to&\P{X} \\
  \sigma&\mapsto&\P\sigma(\tr_{\Sigma}) \\
\end{array}$$

Intuitively, $\Sigma$ can be thought of as the set of (codes of)
propositions, whereas $\Sigma^X$ can be thought of as the set of
\emph{propositional functions over~$X$}. 
The above condition thus expresses that each predicate $p\in\P{X}$ is
represented by at least one propositional function
$\sigma\in\Sigma^X$ such that $\sem{\sigma}_X=p$, which we shall
call a \emph{code} for the predicate~$p$.

\begin{notation}
  Given a code $\sigma\in\Sigma^X$, the predicate
  $\sem{\sigma}_X\in\P{X}$ will be often written
  $\sem{\sigma_x}_{x\in{X}}$.
  In particular, given an individual code $\xi\in\Sigma$, we write
  $(\xi)_{\_\in1}\in\Sigma^1$ the corresponding constant family
  indexed by the singleton~$1$, and $\sem{\xi}_{\_\in1}\in\P{1}$ the
  associated predicate.
\end{notation}

\begin{fact}[Naturality of $\sem{\_}_X$]
  The decoding map $\sem{\_}_X:\Sigma^X\to\P{X}$ is natural in~$X$, in
  the sense that for each map $f:X\to Y$, the following diagram
  commutes:
  $$\begin{array}{@{}c@{}}
    \xymatrix{
      \Sigma^X\ar[r]^{\sem{\_}_X}&\P{X}\\
      \Sigma^Y\ar[r]_{\sem{\_}_Y}\ar[u]^{\_\circ f}&
      \P{Y}\ar[u]_{\P{f}} \\
    }\\
  \end{array}\qquad
  \sem{\sigma\circ f}_X~=~\P{f}(\sem{\sigma}_Y)
  \eqno(\sigma\in\Sigma^Y)$$
\end{fact}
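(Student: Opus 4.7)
The proof is essentially a one-line unpacking of the definition of the decoding map followed by an application of the contravariant functoriality of $\P$. So the plan is very short, and there is no real obstacle to overcome.

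First I would recall the definition: for any set $Z$ and any map $\sigma \in \Sigma^Z$, viewed as a morphism $\sigma : Z \to \Sigma$ in $\Set$, the decoding map sends $\sigma$ to $\sem{\sigma}_Z = \P\sigma(\tr_\Sigma) \in \P Z$. Fix a map $f : X \to Y$ and a code $\sigma \in \Sigma^Y$. Then $\sigma \circ f \in \Sigma^X$ is literally the composite $X \xrightarrow{f} Y \xrightarrow{\sigma} \Sigma$ in $\Set$, and by definition
$$\sem{\sigma \circ f}_X ~=~ \P(\sigma \circ f)(\tr_\Sigma)\,.$$

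Next, since $\P : \Set^{\op} \to \HA$ is a (contravariant) functor, we have $\P(\sigma \circ f) = \P f \circ \P\sigma$. Applying both sides to $\tr_\Sigma$ gives
$$\P(\sigma \circ f)(\tr_\Sigma) ~=~ \P f(\P\sigma(\tr_\Sigma)) ~=~ \P f(\sem{\sigma}_Y)\,,$$
which combined with the previous equation yields exactly the naturality equation $\sem{\sigma \circ f}_X = \P f(\sem{\sigma}_Y)$. The commutation of the diagram follows because the top and bottom edges are precisely the decoding maps in $\Sigma^X$ and $\Sigma^Y$, the left edge is precomposition by $f$ (which sends $\sigma \in \Sigma^Y$ to $\sigma \circ f \in \Sigma^X$), and the right edge is $\P f$. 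No step here uses anything beyond the contravariant functoriality of $\P$, so there is no real difficulty.
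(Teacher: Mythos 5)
Your proof is correct and is exactly the paper's argument: unfold $\sem{\sigma\circ f}_X=\P(\sigma\circ f)(\tr_\Sigma)$ and apply the contravariant functoriality $\P(\sigma\circ f)=\P{f}\circ\P\sigma$. Nothing further is needed.
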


\begin{proof}
  For all $\sigma\in\Sigma^Y$, we have\ \
  $\sem{\sigma\circ f}_X=\P(\sigma\circ f)(\tr_{\Sigma})=
  \P{f}(\P\sigma(\tr_{\Sigma}))=\P{f}(\sem{\sigma}_Y)$.
\end{proof}

\begin{remarks}[Non-uniqueness of generic predicates]
  It is important to observe that in a $\Set$-based tripos~$\P$, the
  generic predicate is never unique.
  \begin{itemize}
  \item Indeed, given a generic predicate $\tr_{\Sigma}\in\P\Sigma$
    and a surjection $h:\Sigma'\to\Sigma$, we can always construct
    another generic predicate $\tr_{\Sigma'}\in\P\Sigma'$ by letting
    $\tr_{\Sigma'}=\P{h}(\tr_{\Sigma})$%
    \footnote{To prove that $\tr_{\Sigma'}\in\P\Sigma'$ is another
      generic predicate, we actually need a right inverse of
      $h:\Sigma'\to\Sigma$, which exists by~(AC).
      Without (AC), the same argument works by replacing `surjective'
      with `having a right inverse'.}.
    \COUIC{
      \begin{proof}
        Since $h:\Sigma'\to\Sigma$ is surjective, it has a right
        inverse by (AC), that is a map $h':\Sigma\to\Sigma'$ such that
        $h\circ h'=\id_{\Sigma}$. 
        Given a set $X$ and a predicate $p\in\P{X}$, we know that
        there is a code $\sigma\in\Sigma^X$ such that
        $\P\sigma(\tr_{\Sigma})=p$.
        Now letting $\sigma':=h'\circ\sigma\in\Sigma'^X$,
        we observe that
        $\P{\sigma'}(\tr_{\Sigma'})=
        \P(h'\circ\sigma)(\P{h}(\tr_{\Sigma}))=
        \P(h\circ h'\circ\sigma)(\tr_{\Sigma})=
        \P{\sigma}(\tr_{\Sigma})=p$.
      \end{proof}
    }
  \item More generally, if $\tr_{\Sigma}\in\P\Sigma$ and
    $\tr_{\Sigma'}\in\P\Sigma'$ are two generic predicates of the same
    tripos~$\P$, then there always exist two conversion maps
    $h:\Sigma'\to\Sigma$ and $h':\Sigma\to\Sigma'$ such that
    $\tr_{\Sigma'}=\P{h}(\tr_{\Sigma})$ and
    $\tr_{\Sigma}=\P{h'}(\tr_{\Sigma'})$.
  \end{itemize}
  In what follows, we will work with a fixed generic predicate
  $\tr_{\Sigma}\in\P\Sigma$.
\end{remarks}

\subsection{Defining connectives on~$\Sigma$}
\label{ss:DefConnSigma}
We want to show that the operations $\land$, $\lor$ and $\to$ on each
Heyting algebra $\P{X}$ ($X\in\Set$) can be derived from analogous
operations on the generic set $\Sigma$ of propositions.
For that, we pick codes
$({\dand}),({\dor}),({\dimp})\in\Sigma^{\Sigma\times\Sigma}$ such that
$$\begin{array}{r@{~{}~}c@{~{}~}l}
  \sem{\dand}_{\Sigma\times\Sigma}&=&
  \sem{\pi}_{\Sigma\times\Sigma}\land
  \sem{\pi'}_{\Sigma\times\Sigma}\\[3pt]
  \sem{\dor}_{\Sigma\times\Sigma}&=&
  \sem{\pi}_{\Sigma\times\Sigma}\lor
  \sem{\pi'}_{\Sigma\times\Sigma}\\[3pt]
  \sem{\dimp}_{\Sigma\times\Sigma}&=&
  \sem{\pi}_{\Sigma\times\Sigma}\to
  \sem{\pi'}_{\Sigma\times\Sigma}\\
\end{array}\eqno\begin{array}{r@{}}
  ({\in}~\P(\Sigma\times\Sigma))\\[3pt]
  ({\in}~\P(\Sigma\times\Sigma))\\[3pt]
  ({\in}~\P(\Sigma\times\Sigma))\\
  \end{array}$$
writing $\pi,\pi':\Sigma\times\Sigma\to\Sigma$ the two projections
from $\Sigma\times\Sigma$ to $\Sigma$.

\begin{proposition}\label{p:ConnSigma}
  Let $X$ be a set.
  For all codes $\sigma,\tau\in\Sigma^X$, we have
  $$\begin{array}{r@{~{}~}c@{~{}~}l}
    \sem{\sigma_x\dand\tau_x}_{x\in X}
    &=&\sem{\sigma}_X\land\sem{\tau}_X\\[3pt]
    \sem{\sigma_x\dor\tau_x}_{x\in X}
    &=&\sem{\sigma}_X\lor\sem{\tau}_X\\[3pt]
    \sem{\sigma_x\dimp\tau_x}_{x\in X}
    &=&\sem{\sigma}_X\to\sem{\tau}_X\\
  \end{array}\eqno\begin{array}{r@{}}
  ({\in}~\P{X})\\[3pt]
  ({\in}~\P{X})\\[3pt]
  ({\in}~\P{X})\\
  \end{array}$$
\end{proposition}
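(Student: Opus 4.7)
The plan is to reduce each of the three identities to a reindexing along the pairing map $\<\sigma,\tau\>:X\to\Sigma\times\Sigma$, using the naturality of the decoding $\sem{\_}$ together with the fact that $\P{f}:\P{Y}\to\P{X}$ is a morphism of Heyting algebras for every $f:X\to Y$ (which is built into the codomain $\HA$ of the functor~$\P$).

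Concretely, for the conjunction case, I would first observe that the propositional function $(\sigma_x\dand\tau_x)_{x\in X}\in\Sigma^X$ is literally the composite $({\dand})\circ\<\sigma,\tau\>$, where $({\dand})\in\Sigma^{\Sigma\times\Sigma}$ is read as a function $\Sigma\times\Sigma\to\Sigma$ and $\<\sigma,\tau\>:X\to\Sigma\times\Sigma$ is the pairing of $\sigma$ and $\tau$. The naturality of $\sem{\_}$ (the preceding Fact) then gives
\[
  \sem{\sigma_x\dand\tau_x}_{x\in X}
  \;=\;\P{\<\sigma,\tau\>}\bigl(\sem{\dand}_{\Sigma\times\Sigma}\bigr).
\]

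Next, I would plug in the defining equation $\sem{\dand}_{\Sigma\times\Sigma}=\sem{\pi}_{\Sigma\times\Sigma}\land\sem{\pi'}_{\Sigma\times\Sigma}$ and commute $\P{\<\sigma,\tau\>}$ with $\land$, which is legitimate because $\P{\<\sigma,\tau\>}$ is a Heyting algebra morphism. Applying naturality once more, and using the identities $\pi\circ\<\sigma,\tau\>=\sigma$ and $\pi'\circ\<\sigma,\tau\>=\tau$, the two resulting terms simplify to
\[
  \P{\<\sigma,\tau\>}\bigl(\sem{\pi}_{\Sigma\times\Sigma}\bigr)=\sem{\sigma}_X
  \quad\text{and}\quad
  \P{\<\sigma,\tau\>}\bigl(\sem{\pi'}_{\Sigma\times\Sigma}\bigr)=\sem{\tau}_X,
\]
which yields the desired equation for $\dand$.

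The disjunction and implication cases follow by exactly the same three-line computation, since $\lor$ and $\to$ are equally preserved by any Heyting algebra morphism $\P{\<\sigma,\tau\>}$. There is no real obstacle to overcome here: the content of the proposition is simply that the codes $\dand$, $\dor$, $\dimp$ chosen to represent the Heyting operations on the generic object $\Sigma$ transport correctly to every fibre $\P{X}$, and this transport is automatic because reindexing in a tripos is, by definition, a Heyting morphism.
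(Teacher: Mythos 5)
Your proof is correct and is essentially identical to the paper's: both express the code as a composite with the pairing map $\<\sigma,\tau\>$, apply naturality of $\sem{\_}$, and use the fact that $\P\<\sigma,\tau\>$ is a Heyting algebra morphism to distribute it over the connective before simplifying via $\pi\circ\<\sigma,\tau\>=\sigma$ and $\pi'\circ\<\sigma,\tau\>=\tau$. The only cosmetic difference is that the paper spells out the implication case while you spell out conjunction.
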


\begin{proof}
  Let us treat (for instance) the case of implication:
  $$\begin{array}[b]{r@{~{}~}c@{~{}~}l}
    \sem{\sigma_x\dimp\tau_x}_{x\in X}
    &=&\sem{({\dimp})\circ\<\sigma,\tau\>}_X
    ~=~\P\<\sigma,\tau\>\sem{{\dimp}}_{\Sigma\times\Sigma} \\
    &=&\P\<\sigma,\tau\>\bigl(\sem{\pi}_{\Sigma\times\Sigma}\to
    \sem{\pi'}_{\Sigma\times\Sigma}\bigr)\\
    &=&\P\<\sigma,\tau\>\bigl(\sem{\pi}_{\Sigma\times\Sigma}\bigr)\to
    \P\<\sigma,\tau\>\bigl(\sem{\pi'}_{\Sigma\times\Sigma}\bigr)\\
    &=&\sem{\pi\circ\<\sigma,\tau\>}_X\to
    \sem{\pi'\circ\<\sigma,\tau\>}_X
    ~=~\sem{\sigma}_X\to\sem{\tau}_X\,.\\
  \end{array}\eqno\mbox{\qedhere}$$
\end{proof}

Similarly, we pick codes $\dbot,\dtop\in\Sigma$ such that
$$\sem{\dbot}_{\_\in 1}=\bot_1\in\P(1)\qquad\text{and}\qquad
\sem{\dtop}_{\_\in 1}=\top_1\in\P(1)\,.$$
Again, we easily check that:
\begin{proposition}\label{p:UnitSigma}
  For each set $X$, we have:
  $$\begin{array}{r@{~{}~}r@{~{}~}c@{~{}~}l}
    \sem{\dbot}_{x\in X}&=&\bot_X\\[3pt]
    \sem{\dtop}_{x\in X}&=&\top_X\\
  \end{array}\eqno\begin{array}{r@{}}
  ({\in}~\P{X})\\[3pt]
  ({\in}~\P{X})\\
  \end{array}$$
\end{proposition}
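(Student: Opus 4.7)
The plan is to reduce the claim to the one-point case (where it holds by choice of $\dbot$ and $\dtop$) via the naturality of the decoding map $\sem{\_}_{(-)}$ stated in the previous Fact, and then to invoke the fact that each substitution map $\P f$ is a morphism of Heyting algebras and therefore preserves the top and bottom elements.

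More precisely, I would view the constant family $(\dbot)_{x\in X}\in\Sigma^X$ as the composite $\dbot\circ 1_X$, where $1_X:X\to 1$ is the unique map and $\dbot$ is regarded as a map $1\to\Sigma$ picking out the chosen code. Applying naturality to $f=1_X$ and $\sigma=\dbot\in\Sigma^1$ gives
$$\sem{\dbot}_{x\in X}\;=\;\sem{\dbot\circ 1_X}_X\;=\;\P(1_X)\bigl(\sem{\dbot}_{\_\in 1}\bigr)\;=\;\P(1_X)(\bot_1).$$
Since $\P(1_X):\P(1)\to\P(X)$ is a morphism in $\HA$, it preserves bottoms, so $\P(1_X)(\bot_1)=\bot_X$, yielding the first equation. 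The second is identical, mutatis mutandis, using that $\P(1_X)$ preserves tops.

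There is no real obstacle here; the only subtlety worth spelling out is the small notational identification of an element $\dbot\in\Sigma$ with the constant family $(\dbot)_{x\in X}\in\Sigma^X$, i.e.\ with the composite $\dbot\circ 1_X$ in the notation of the Fact on naturality. Once that identification is made, the rest is a one-line application of naturality followed by the preservation of Heyting constants by substitution.
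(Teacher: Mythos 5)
Your proof is correct and follows exactly the paper's own argument: write the constant family as $(\dbot)_{\_\in 1}\circ 1_X$, apply naturality of the decoding map, and use that $\P{1_X}$ is a morphism of Heyting algebras, hence preserves $\bot$ and $\top$. (The paper spells out the $\top$ case; you spell out the $\bot$ case; they are symmetric.)
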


\begin{proof}
  Let us treat (for instance) the case of $\top$:
  $$\sem{\dtop}_{x\in X}
  ~=~\sem{(\dtop)_{\_\in 1}\circ 1_X}_X
  ~=~\P{1_X}(\sem{\dtop}_{\_\in 1})
  ~=~\P{1_X}(\top_1)~=~\top_X$$
  (writing $1_X:X\to 1$ the unique map from $X$ to $1$).
\end{proof}

\subsection{Defining quantifiers on~$\Sigma$}
\label{ss:DefQuantSigma}

In this section, we propose to show that the adjoints
$$\exists{f},\forall{f}~:~\P{X}\to\P{Y}
\eqno(\text{for each}~f:X\to Y)$$
can be derived from suitable \emph{quantifiers} on the generic
set~$\Sigma$ of propositions.
For that, we consider the membership relation
$E:=\{(\xi,s):\xi\in s\}\subseteq\Sigma\times\Pow(\Sigma)$ and
write $e_1:E\to\Sigma$ and $e_2:E\to\Pow(\Sigma)$ the corresponding
projections.
We pick two codes
$\dbigvee,\dbigwedge\in\Sigma^{\Pow(\Sigma)}$ such that
$$\begin{array}{r@{~~}c@{~~}l}
  \sem{\dbigvee}_{\Pow(\Sigma)}&=&\exists{e_2}(\sem{e_1}_E)\\[3pt]
  \sem{\dbigwedge}_{\Pow(\Sigma)}&=&\forall{e_2}(\sem{e_1}_E)\\
\end{array}\eqno\begin{array}{r@{}}
({\in}~\P(\Pow(\Sigma)))\\[3pt]
({\in}~\P(\Pow(\Sigma)))\\
\end{array}$$

\begin{proposition}\label{p:QuantSigma}
  Given a code $\sigma\in\Sigma^X$ and a map $f:X\to Y$, we have:
  $$\begin{array}{r@{~{}~}c@{~{}~}l}
    \bigsem{\dbigvee\{\sigma_x:x\in f^{-1}(y)\}}_{y\in Y}
    &=&\exists{f}(\sem{\sigma}_X)\\[6pt]
    \bigsem{\dbigwedge\{\sigma_x:x\in f^{-1}(y)\}}_{y\in Y}
    &=&\forall{f}(\sem{\sigma}_X)\\
  \end{array}\eqno\begin{array}{r@{}}
  ({\in}~\P{Y})\\[6pt]
  ({\in}~\P{Y})\\
  \end{array}$$
\end{proposition}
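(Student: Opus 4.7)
The plan is to recast the LHS so that the Beck-Chevalley condition reduces it to $\exists{f}(\sem{\sigma}_X)$; I will spell out the $\exists$ case and note that the $\forall$ case is dual. First, introduce the map $g:Y\to\Pow(\Sigma)$ defined by $g(y)=\{\sigma_x:x\in f^{-1}(y)\}$, so that the LHS is exactly $\sem{\dbigvee\circ g}_Y$. By naturality of the decoding map and the defining equation of $\dbigvee$, this equals $\P{g}(\exists{e_2}(\sem{e_1}_E))$.

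Next, form the pullback of $e_2$ along $g$ in $\Set$: explicitly, $P=\{(y,\xi)\in Y\times\Sigma:\xi\in g(y)\}$, with projections $p_1(y,\xi)=y$ and $p_2(y,\xi)=(\xi,g(y))$ (so that $e_2\circ p_2=g\circ p_1$). Beck-Chevalley then yields
$$\P{g}(\exists{e_2}(\sem{e_1}_E))~=~\exists{p_1}(\P{p_2}(\sem{e_1}_E))~=~\exists{p_1}(\sem{e_1\circ p_2}_P).$$

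To connect this back to $\exists{f}(\sem{\sigma}_X)$, consider the map $q:X\to P$ defined by $q(x)=(f(x),\sigma_x)$. It satisfies $p_1\circ q=f$ and $e_1\circ p_2\circ q=\sigma$, and it is surjective, since every $\xi\in g(y)$ equals $\sigma_x$ for some $x\in f^{-1}(y)$. Invoking (AC) to pick a right inverse of $q$, Lemma~\ref{l:SimplAdj}(2) gives $\exists{q}\circ\P{q}=\id_{\P{P}}$. Because $\P{q}(\sem{e_1\circ p_2}_P)=\sem{\sigma}_X$ by naturality, applying $\exists{q}$ to both sides yields $\sem{e_1\circ p_2}_P=\exists{q}(\sem{\sigma}_X)$. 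Combined with functoriality $\exists{f}=\exists{p_1}\circ\exists{q}$, this gives
$$\exists{p_1}(\sem{e_1\circ p_2}_P)~=~\exists{p_1}(\exists{q}(\sem{\sigma}_X))~=~\exists{f}(\sem{\sigma}_X),$$
closing the first identity. The $\forall$ identity follows by the same diagram, replacing $\exists$ by $\forall$ throughout and using $\forall{q}\circ\P{q}=\id_{\P{P}}$ from Lemma~\ref{l:SimplAdj}(2) together with the $\forall$-form of Beck-Chevalley.

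The only genuine subtlety is that the naive short square built directly from $X$ (using the map $x\mapsto(\sigma_x,g(f(x))):X\to E$ together with $f$ and $g$) is \emph{not} a pullback in general, since two distinct $x,x'\in f^{-1}(y)$ can have $\sigma_x=\sigma_{x'}$. Interposing the true pullback $P$ and using (AC) via Lemma~\ref{l:SimplAdj}(2) to transport predicates along the surjection $q:X\onto P$ is precisely what bridges the gap.
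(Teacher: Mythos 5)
Your proof is correct and follows essentially the same route as the paper's: the paper likewise introduces the intermediate object $G:=\{(\sigma_x,f(x)):x\in X\}$ (which coincides, up to swapping coordinates, with your pullback $P$), applies Beck--Chevalley to the square over $\Pow(\Sigma)$, and then uses the surjection $q:X\onto G$ together with Lemma~\ref{l:SimplAdj}(2) to transport the predicate back to $X$. The subtlety you flag at the end --- that the naive square built directly on $X$ need not be a pullback --- is precisely why the paper interposes $G$ as well.
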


\begin{proof}
  Let us define the map $h:Y\to\Pow(\Sigma)$ by
  $h(y):=\{\sigma_x:x\in f^{-1}(y)\}$ for all $y\in Y$.
  From this definition and from the definitions of $\dbigvee$,
  $\dbigwedge$, we get
  $$\begin{array}{rcl}
    \sem{\dbigvee\{\sigma_x:x\in f^{-1}(y)\}}_{y\in Y}
    &=&\sem{{\dbigvee}\circ h}_Y
    ~=~\P{h}\bigl(\sem{\dbigvee}_{\Pow(\Sigma)}\bigr)
    ~=~\P{h}\bigl(\exists{e_2}(\sem{e_1}_E)\bigr) \\[3pt]
    \sem{\dbigwedge\{\sigma_x:x\in f^{-1}(y)\}}_{y\in Y}
    &=&\sem{{\dbigwedge}\circ h}_Y
    ~=~\P{h}\bigl(\sem{\dbigwedge}_{\Pow(\Sigma)}\bigr)
    ~=~\P{h}\bigl(\forall{e_2}(\sem{e_1}_E)\bigr) \\
  \end{array}$$
  Let us now consider the set $G\subseteq\Sigma\times Y$ defined by
  $G:=\{(\sigma_x,f(x)):x\in X\}$ as well as the two functions
  $g:G\to Y$ and $g':G\to E$ given by
  $$g(\xi,y)~:=~y\qquad\text{and}\qquad
  g'(\xi,y)~:=~(\xi,h(y))
  \eqno(\text{for all}~(\xi,y)\in G)$$
  We observe that the following diagram is a pullback in~$\Set$:
  $$\xymatrix{
    G\ar[r]^{g}\ar[d]_{g'}\pullback{6pt} & Y\ar[d]^{h} \\
    E\ar[r]_{e_2}&\Pow(\Sigma) \\
  }$$
  Hence $\P{h}\circ\exists{e_2}=\exists{g}\circ\P{g'}$ and
  $\P{h}\circ\forall{e_2}=\forall{g}\circ\P{g'}$ (Beck-Chevalley),
  and thus:
  $$\begin{array}{rcl}
    \bigsem{\dbigvee\{\sigma_x:x\in f^{-1}(y)\}}_{y\in Y}
    &=&(\P{h}\circ\exists{e_2})(\sem{e_1}_E)
    ~=~(\exists{g}\circ\P{g'})(\sem{e_1}_E)\\[3pt]
    \bigsem{\dbigwedge\{\sigma_x:x\in f^{-1}(y)\}}_{y\in Y}
    &=&(\P{h}\circ\forall{e_2})(\sem{e_1}_E)
    ~=~(\forall{g}\circ\P{g'})(\sem{e_1}_E)\\
  \end{array}$$
  Now we consider the map $q:X\to G$ defined by $q(x)=(\sigma_x,f(x))$
  for all $x\in X$.
  Since~$q$ is surjective, it has a right inverse by (AC), hence we
  have $\exists{q}\circ\P{q}=\forall{q}\circ\P{q}=\id_{\P{G}}$ by
  Lemma~\ref{l:SimplAdj}~(2).
  Therefore:
  $$\begin{array}{r@{~~}c@{~~}l}
    \bigsem{\dbigvee\{\sigma_x:x\in f^{-1}(y)\}}_{y\in Y}
    &=&(\exists{g}\circ\P{g'})(\sem{e_1}_E)
    ~=~(\exists{g}\circ\exists{q}\circ\P{q}\circ\P{g'})(\sem{e_1}_E)\\
    &=&\bigl(\exists(g\circ q)\circ\P(g'\circ q)\bigr)(\sem{e_1}_E)
    ~=~\exists{f}\bigl(\P(g'\circ q)(\sem{e_1}_E)\bigr) \\
    &=&\exists{f}\bigl(\sem{e_1\circ g'\circ q}_X\bigr)
    ~=~\exists{f}(\sem{\sigma}_X) \\
  \end{array}$$
  (since $g\circ q=f$ and $e_1\circ g'\circ q=\sigma$).
  And similarly for ${\forall}$.
\end{proof}

\subsection{Defining the `filter' $\Phi$}
\label{ss:DefFilterSigma}

In Sections~\ref{ss:DefConnSigma} and~\ref{ss:DefQuantSigma}, we
introduced codes
$({\dand}),({\dor}),(\dimp)\in\Sigma^{\Sigma\times\Sigma}$ and
$\dbigvee,\dbigwedge\in\Sigma^{\Pow(\Sigma)}$ such that for all
sets $X$ and for all predicates $p,q\in\P{X}$:
\begin{itemize}
\item If $\sigma,\tau\in\Sigma^X$ are codes for $p,q\in\P{X}$,
  respectively, then:
  $$\begin{array}{l@{\quad}l@{\qquad}l@{\qquad}l@{\quad}l}
    (\sigma_x\dand\tau_x)_{x\in X}&({\in}~\Sigma^X)
    &\text{is a code for}&p\land q&({\in}~\P{X}) \\[3pt]
    (\sigma_x\dor\tau_x)_{x\in X}&({\in}~\Sigma^X)
    &\text{is a code for}&p\lor q&({\in}~\P{X}) \\[3pt]
    (\sigma_x\dimp\tau_x)_{x\in X}&({\in}~\Sigma^X)
    &\text{is a code for}&p\to q&({\in}~\P{X}) \\
  \end{array}$$
\item If $\sigma\in\Sigma^X$ is a code for $p\in\P{X}$
  and if $f:X\to Y$ is any map, then:
  $$\begin{array}{l@{\quad}l@{\qquad}l@{\qquad}l@{\quad}l}
    \Bigl({\textstyle\dbigvee}
    \bigl\{\sigma_x:x\in f^{-1}(y)\bigr\}\Bigr)_{y\in Y}&({\in}~\Sigma^Y)
    &\text{is a code for}&\exists{f}(p)&({\in}~\P{Y}) \\[6pt]
    \Bigl({\textstyle\dbigwedge}
    \bigl\{\sigma_x:x\in f^{-1}(y)\bigr\}\Bigr)_{y\in Y}&({\in}~\Sigma^Y)
    &\text{is a code for}&\forall{f}(p)&({\in}~\P{Y}) \\
  \end{array}$$
\end{itemize}
It now remains to characterize the ordering on each Heyting algebra
$\P{X}$ from the above constructions in~$\Sigma$.
For that, we consider the set $\Phi\subseteq\Sigma$ defined by
$\Phi:=\{\xi\in\Sigma:\sem{\xi}_{\_\in1}=\top_1\}$, writing~$\top_1$
the top element of $\P{1}$.
We check that:
\begin{proposition}\label{p:OrderPhi}
  For all $X\in\Set$ and $\sigma,\tau\in\Sigma^X$, we have
  $$\sem{\sigma}_X\le\sem{\tau}_X\qquad\text{iff}\qquad
  {\textstyle\dbigwedge}\{\sigma_x\dimp\tau_x:x\in X\}\in\Phi\,.$$
\end{proposition}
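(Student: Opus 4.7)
The plan is to convert both sides of the equivalence into statements about $\P{1}$ and compare. First, by Proposition~\ref{p:ConnSigma} applied to $\sigma,\tau\in\Sigma^X$, the family $(\sigma_x\dimp\tau_x)_{x\in X}$ is a code for $\sem{\sigma}_X\to\sem{\tau}_X\in\P{X}$. Next, I would apply Proposition~\ref{p:QuantSigma} to this code with the unique map $1_X:X\to 1$: since $1_X^{-1}(\ast)=X$ for the unique point $\ast\in 1$, this gives that the constant family $\bigl(\dbigwedge\{\sigma_x\dimp\tau_x:x\in X\}\bigr)_{\_\in 1}$ is a code for $\forall{1_X}(\sem{\sigma}_X\to\sem{\tau}_X)\in\P{1}$. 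Unfolding the definition of $\Phi$, the right-hand side of the equivalence therefore reads
$$\forall{1_X}\bigl(\sem{\sigma}_X\to\sem{\tau}_X\bigr)~=~\top_1\,.$$

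Second, I would simplify the left-hand side using two standard facts. The Heyting-algebra identity $a\le b\liff(a\to b)=\top$ gives
$$\sem{\sigma}_X\le\sem{\tau}_X\qquad\liff\qquad
\sem{\sigma}_X\to\sem{\tau}_X~=~\top_X\,.$$
Then, writing $p:=\sem{\sigma}_X\to\sem{\tau}_X$, the adjunction $\P{1_X}\dashv\forall{1_X}$ together with the fact that $\P{1_X}$ (being a morphism of Heyting algebras) preserves the top element yields
$$\top_1\le\forall{1_X}(p)\quad\liff\quad\P{1_X}(\top_1)\le p
\quad\liff\quad\top_X\le p\quad\liff\quad p=\top_X\,,$$
and since $\forall{1_X}(p)\le\top_1$ holds trivially, this is equivalent to $\forall{1_X}(p)=\top_1$. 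Chaining these equivalences closes the loop.

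There is no real obstacle: Beck–Chevalley is absorbed into Proposition~\ref{p:QuantSigma}, and the only thing to watch is the harmless bookkeeping between an element $\xi\in\Sigma$ and the constant family $(\xi)_{\_\in 1}\in\Sigma^1$ used in the definition of $\Phi$.
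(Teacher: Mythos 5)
Your proposal is correct and follows essentially the same route as the paper: reduce the left-hand side to $\top_X\le\sem{\sigma}_X\to\sem{\tau}_X$ via the Heyting identity, use Proposition~\ref{p:ConnSigma} to recode the implication, transport along the adjunction $\P{1_X}\dashv\forall{1_X}$, and identify the result with the $\dbigwedge$-code via Proposition~\ref{p:QuantSigma} and the definition of $\Phi$. The only difference is presentational (you normalize both sides to equalities with $\top_1$ rather than chaining inequalities), which changes nothing of substance.
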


\begin{proof}
  Writing $1_X:X\to 1$ the unique map from~$X$ to~$1$, we have:
  $$\begin{array}[b]{r@{\quad}c@{\quad}l}
    \sem{\sigma}_X\le\sem{\tau}_X
    &\text{iff}&\top_X\le\sem{\sigma}_X\to\sem{\tau}_X \\
    &\text{iff}&\P{1_X}(\top_1)\le\sem{\sigma_x\dimp\tau_x}_{x\in X}\\
    &\text{iff}&\top_1\le\forall{1_X}
    \bigl(\sem{\sigma_x\dimp\tau_x}_{x\in X}\bigr)\\
    &\text{iff}&\top_1\le\bigsem{\dbigwedge
      \{\sigma_x\dimp\tau_x:x\in X\}}_{\_\in 1}\\
    &\text{iff}&\dbigwedge\{\sigma_x\dimp\tau_x:x\in X\}\in\Phi\\
  \end{array}\eqno\mbox{\qedhere}$$
\end{proof}

So that we can complete the above correspondence between the
operations on the Heyting algebra $\P{X}$ and the analogous
operations on the set~$\Sigma$ by concluding that:
\begin{itemize}
\item If $\sigma,\tau\in\Sigma^X$ are codes for $p,q\in\P{X}$,
  respectively, then:
  $$p\le q\quad({\in}~\P{X})\qquad\text{iff}\qquad
  \mathop{\dbigwedge}\limits_{x\in X}
  (\sigma_x\dimp\tau_x)~\in~\Phi
  \quad({\subseteq}~\Sigma)$$
\end{itemize}

\begin{remark}
  At this stage, it would be tempting to see the set $\Sigma$ as a
  complete Heyting algebra whose structure is given by the operations
  ${\dand},{\dor},{\dimp},\dbigvee,\dbigwedge$, while seeing the
  subset $\Phi\subseteq\Sigma$ as a particular filter of~$\Sigma$.
  Alas, the above operations come with absolutely no algebraic
  property, since in general we have
  $$\xi\dand\xi\neq\xi,\qquad
  \xi\dand\xi'\neq\xi'\dand\xi,\qquad
  \xi\dimp\xi\neq\dtop,\qquad
  {\textstyle\dbigvee}\{\xi\}\neq
  {\textstyle\dbigwedge}\{\xi\}\neq
  \xi,\qquad\text{etc.}$$
  So that in the end, these operations fail to endow the set~$\Sigma$
  with the structure of a (complete) Heyting algebra---although they
  are sufficient in practice to characterize the whole structure of
  the tripos $\P:\Set^{\op}\to\HA$ via the pseudo-filter
  $\Phi\subseteq\Sigma$.
  However, we shall see in Section~\ref{s:ImpAlg} that the set
  $\Sigma$ generates an implicative algebra~$\A$ whose operations
  (arbitrary meets and implication) capture the whole structure of the
  tripos~$\P$ in a more natural way.
\end{remark}

\subsection{Miscellaneous properties}\label{ss:MiscProp}
In this section, we present some properties of the set~$\Sigma$ that
will be useful to construct the implicative algebra~$\A$.
\begin{proposition}[Merging quantifications]%
  \label{p:MergeQuantSigma}
  $$\begin{array}{r@{~{}~}c@{~{}~}l}
    \bigsem{{\dbigvee}\bigl\{{\dbigvee}s:s\in S\bigr\}}
    _{S\in\Pow(\Pow(\Sigma))}&=&
    \bigsem{{\dbigvee}\bigl({\bigcup}S\bigr)}
    _{S\in\Pow(\Pow(\Sigma))}\\[3pt]
    \bigsem{{\dbigwedge}\bigl\{{\dbigwedge}s:s\in S\bigr\}}
    _{S\in\Pow(\Pow(\Sigma))}&=&
    \bigsem{{\dbigwedge}\bigl({\bigcup}S\bigr)}
    _{S\in\Pow(\Pow(\Sigma))}\\
  \end{array}$$
\end{proposition}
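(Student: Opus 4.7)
The plan is to show that both sides of the first identity equal a common predicate $\exists h(\sem{g}_P)$ for a single auxiliary set~$P$ and two projections $g, h$. Specifically, let
$$P \;:=\; \{(\xi, s, S) \in \Sigma \times \Pow(\Sigma) \times \Pow(\Pow(\Sigma)) : \xi \in s \in S\}$$
with projections $g : P \to \Sigma$, $(\xi, s, S) \mapsto \xi$, and $h : P \to \Pow(\Pow(\Sigma))$, $(\xi, s, S) \mapsto S$. For the right-hand side, a single application of Proposition~\ref{p:QuantSigma} with $f = h$ and $\sigma = g$ is enough: for each $S$, the image $\{g(p) : p \in h^{-1}(S)\}$ is exactly $\bigcup S$, so the proposition gives $\bigsem{\dbigvee(\bigcup S)}_{S} = \exists h(\sem{g}_P)$ directly.

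For the left-hand side I iterate in two steps. First, introduce the intermediate set $F := \{(s, S) : s \in S\}$ with the two projections $\pi_1 : F \to \Pow(\Sigma)$ and $\pi_2 : F \to \Pow(\Pow(\Sigma))$. Applying Proposition~\ref{p:QuantSigma} to $\pi_2$ with family $(\dbigvee \circ \pi_1) \in \Sigma^F$ yields
$$\bigsem{\dbigvee\{\dbigvee s : s \in S\}}_{S} \;=\; \exists\pi_2\bigl(\P\pi_1(\sem{\dbigvee}_{\Pow(\Sigma)})\bigr).$$
Next, observe that $P$ is naturally the pullback of $\pi_1 : F \to \Pow(\Sigma)$ and $e_2 : E \to \Pow(\Sigma)$, via the projections $p_F : P \to F$, $(\xi, s, S) \mapsto (s, S)$, and $p_E : P \to E$, $(\xi, s, S) \mapsto (\xi, s)$. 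By Beck-Chevalley, $\P\pi_1 \circ \exists e_2 \;=\; \exists p_F \circ \P p_E$, and since $e_1 \circ p_E = g$ and $\pi_2 \circ p_F = h$, the functoriality of $\exists$ collapses the composite to $\exists h(\sem{g}_P)$, matching the right-hand side.

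The second identity, for $\dbigwedge$, is handled by a strictly dual argument: Beck-Chevalley holds for $\forall$ as well, and the same set $P$ together with the same pullback square and the same two applications (one of Proposition~\ref{p:QuantSigma}, one of Beck-Chevalley) deliver the corresponding equality with every $\exists$ replaced by $\forall$. The only real hurdle is bookkeeping, namely picking the auxiliary sets $P$ and $F$ so that $P$ is simultaneously (i) the pullback needed to apply Beck-Chevalley on the left-hand side and (ii) the domain over which Proposition~\ref{p:QuantSigma} yields the right-hand side in one stroke; once this is set up, the rest is purely formal manipulation.
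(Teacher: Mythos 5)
Your proof is correct and is essentially the paper's own argument: your $P$, $p_E$, $p_F$ are the paper's $G$, $g_1$, $g_2$, and both proofs hinge on the same pullback of $e_2$ against the first projection of $F$, two applications of Proposition~\ref{p:QuantSigma}, and Beck--Chevalley to collapse $\exists\pi_2\circ\P\pi_1\circ\exists e_2$ into $\exists(\pi_2\circ p_F)\circ\P(e_1\circ p_E)$. The only difference is presentational (you compute both sides down to $\exists h(\sem{g}_P)$ rather than chaining equalities from left to right).
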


\begin{proof}
  Let us consider:
  \begin{itemize}
  \item The membership relation
    $E:=\{(\xi,s):\xi\in s\}\subseteq\Sigma\times\Pow(\Sigma)$
    equipped with the projections $e_1:E\to\Sigma$ and
    $e_2:E\to\Pow(\Sigma)$ (see Section~\ref{ss:DefQuantSigma} above);
  \item The membership relation
    $F:=\{(s,S):s\in S\}\subseteq\Pow(\Sigma)\times\Pow(\Pow(\Sigma))$
    equipped with the projections $f_1:F\to\Pow(\Sigma)$
    and $f_2:F\to\Pow(\Pow(\Sigma))$;
  \item The composed membership relation
    $G:=\{(\xi,s,S):\xi\in s\in S\}\subseteq
    \Sigma\times\Pow(\Sigma)\times\Pow(\Pow(\Sigma))$
    equipped with the functions $g_1:G\to E$ and $g_2:G\to F$
    defined by $g_1(\xi,s,S)=(\xi,s)$ and $g_2(\xi,s,S)=(s,S)$ for all
    $(\xi,s,S)\in G$.
  \end{itemize}
  By construction, we have the following pullback:
  $$\xymatrix{
    G\pullback{6pt}\ar[d]_{g_1}\ar[r]^{g_2}&F\ar[r]^{f_2}\ar[d]^{f_1}&
    \Pow\rlap{$(\Pow(\Sigma))$} \\
    E\ar[d]_{e_1}\ar[r]_{e_2}&\Pow(\Sigma) \\
    \Sigma
  }$$
  hence $\P{f_1}\circ\exists{e_2}=\exists{g_2}\circ\P{g_1}$ and
  $\P{f_1}\circ\forall{e_2}=\forall{g_2}\circ\P{g_1}$
  (Beck-Chevalley).
  Therefore:
  $$\begin{array}[b]{@{}r@{~{}~}c@{~{}~}l@{\hskip-10pt}}
    \bigsem{\dbigvee\bigl\{{\dbigvee}s:s\in S\}}
    _{S\in\Pow(\Pow(\Sigma))}
    &=&\bigsem{\dbigvee\bigl\{{\dbigvee}f_1(z):
      z\in f_2^{-1}(S)\bigr\}}_{S\in\Pow(\Pow(\Sigma))}
    ~=~\exists{f_2}\bigl(\sem{\dbigvee\circ f_1}_{F}\bigr)\\
    &=&(\exists{f_2}\circ\P{f_1})
    \bigl(\sem{\dbigvee}_{\Pow(\Sigma)}\bigr)
    ~=~(\exists{f_2}\circ\P{f_1}\circ\exists{e_2})
    \bigl(\sem{e_1}_E\bigr)\\
    &=&(\exists{f_2}\circ\exists{g_2}\circ\P{g_1})
    \bigl(\sem{e_1}_E\bigr)
    ~=~\exists(f_2\circ g_2)\bigl(\sem{e_1\circ g_1}_G\bigr)\\
    &=&\bigsem{\dbigvee\{(e_1\circ g_1)(z):
      z\in(f_2\circ g_2)^{-1}(S)\}}_{S\in\Pow(\Pow(\Sigma))}\\
    &=&\bigsem{{\dbigvee}\bigl(\bigcup{S}\bigr)}_{S\in\Pow(\Pow(\Sigma))}\\
  \end{array}$$
  And similarly for $\forall$.
\end{proof}

The following proposition expresses that the codes for universal
quantification and implication fulfill the usual property of
distributivity (on the right-hand side of implication):
\begin{proposition}\label{p:DistrForallImp}
  We have:\quad
  $\bigsem{{\dbigwedge}\{\theta\dimp\xi~:~\xi\in s\}}
  _{(\theta,s)\in\Sigma\times\Pow(\Sigma)}
  ~=~\bigsem{\theta\dimp{\dbigwedge}s}
  _{(\theta,s)\in\Sigma\times\Pow(\Sigma)}$.
\end{proposition}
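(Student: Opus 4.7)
The plan is to rewrite both sides of the equation as $\forall q(\cdot)$ for a single auxiliary map $q$, and then to conclude by the Heyting-adjunction identity
$$p \to \forall f(r) ~=~ \forall f\bigl(\P f(p) \to r\bigr),$$
which holds in any tripos and is the Frobenius-style counterpart for right adjoints.

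First I would introduce the ``composed membership'' set $Z = \{(\xi, \theta, s) \in \Sigma \times \Sigma \times \Pow(\Sigma) : \xi \in s\}$, together with the projections $\pi_1, \pi_2 : Z \to \Sigma$ onto the $\xi$- and $\theta$-components, and the map $q : Z \to \Sigma \times \Pow(\Sigma)$ sending $(\xi, \theta, s)$ to $(\theta, s)$. Writing $\pi : \Sigma \times \Pow(\Sigma) \to \Sigma$ for the first projection, one immediately checks that $q^{-1}(\theta, s) = \{(\xi, \theta, s) : \xi \in s\}$ and that $\pi \circ q = \pi_2$.

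Next I would apply Proposition~\ref{p:QuantSigma} to the map $q$ twice: once with the code $\pi_1 \in \Sigma^Z$ and once with the code $(\pi_2(z) \dimp \pi_1(z))_{z \in Z} \in \Sigma^Z$; unfolding the latter by Proposition~\ref{p:ConnSigma} gives
$$\bigsem{\dbigwedge s}_{(\theta,s)} ~=~ \forall q(\sem{\pi_1}_Z), \qquad \bigsem{\dbigwedge\{\theta \dimp \xi : \xi \in s\}}_{(\theta,s)} ~=~ \forall q\bigl(\sem{\pi_2}_Z \to \sem{\pi_1}_Z\bigr).$$
Applying Proposition~\ref{p:ConnSigma} to the right-hand side of the claimed identity, that side reads $\sem{\pi}_{\Sigma \times \Pow(\Sigma)} \to \forall q(\sem{\pi_1}_Z)$.

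Finally, I would invoke the displayed Frobenius-for-$\forall$ identity with $f = q$, $p = \sem{\pi}_{\Sigma \times \Pow(\Sigma)}$ and $r = \sem{\pi_1}_Z$. By naturality of the decoding map, $\P q(\sem{\pi}_{\Sigma \times \Pow(\Sigma)}) = \sem{\pi \circ q}_Z = \sem{\pi_2}_Z$, so the identity rewrites the right-hand side into the left-hand side, which is what we want. The only ingredient not already present in the excerpt is the Frobenius-for-$\forall$ identity itself, which is a one-line consequence of the adjunction $\P f \dashv \forall f$ together with the fact that $\P f$ preserves binary meets (being a morphism of Heyting algebras). This is the only step requiring any real care; I do not anticipate a genuine obstacle, only some attentive bookkeeping with the various projections.
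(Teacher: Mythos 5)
Your proposal is correct and is essentially the paper's own argument: the paper likewise introduces the composed membership set $G=\{(\theta,\xi,s):\xi\in s\}$ (your $Z$, with $q=\<g_1,g_3\>$), reduces both sides to $\forall\<g_1,g_3\>$ of an implication via Propositions~\ref{p:ConnSigma} and~\ref{p:QuantSigma}, and then runs exactly the adjunction-plus-meet-preservation chain that you package as the Frobenius identity $p\to\forall f(r)=\forall f(\P f(p)\to r)$ (the paper just inlines that chain as a sequence of equivalences $p\le\mathrm{LHS}$ iff $p\le\mathrm{RHS}$ for arbitrary $p$). No gaps; the isolated Frobenius lemma is valid for the reasons you state.
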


\begin{proof}
  Let us consider the set
  $G:=\{(\theta,\xi,s):\xi\in s\}\subseteq
  \Sigma\times\Sigma\times\Pow(\Sigma)$
  with the corresponding projections
  $g_1,g_2:G\to\Sigma$ and $g_3:G\to\Pow(\Sigma)$.
  We also write $\pi:\Sigma\times\Pow(\Sigma)\to\Sigma$
  the first projection from $\Sigma\times\Pow(\Sigma)$ to~$\Sigma$.
  For all $p\in\P(\Sigma\times\Pow(\Sigma))$, we observe that:
  $$\begin{array}[b]{l@{\quad}l}
    &p~\le~\bigsem{{\dbigwedge}\{\theta\dimp\xi~:~\xi\in s\}}
    _{(\theta,s)\in\Sigma\times\Pow(\Sigma)}\\
    \text{iff}&p~\le~\bigsem{{\dbigwedge}\bigl\{
      g_1(z)\dimp g_2(z)~:~z\in\<g_1,g_3\>^{-1}(\theta,s)
      \bigr\}}_{(\theta,s)\in\Sigma\times\Pow(\Sigma)}\\
    \text{iff}&p~\le~\forall\<g_1,g_3\>\bigl(
    \sem{g_1}_G\to\sem{g_2}_G\bigr) \\
    \text{iff}&\P\<g_1,g_3\>(p)~\le~\sem{g_1}_G\to\sem{g_2}_G \\
    \text{iff}&\P\<g_1,g_3\>(p)\land\sem{g_1}_G~\le~\sem{g_2}_G \\
    \text{iff}&\P\<g_1,g_3\>(p)\land\sem{\pi\circ\<g_1,g_3\>}_G
    ~\le~\sem{g_2}_G \\
    \text{iff}&\P\<g_1,g_3\>\bigl(p\land
    \sem{\pi}_{\Sigma\times\Pow(\Sigma)}\bigr)~\le~\sem{g_2}_G \\
    \text{iff}&p\land\sem{\pi}_{\Sigma\times\Pow(\Sigma)}
    ~\le~\forall\<g_1,g_3\>(\sem{g_2}_G) \\
    \text{iff}&p~\le~
    \sem{\pi}_{\Sigma\times\Pow(\Sigma)}\to
    \forall\<g_1,g_3\>(\sem{g_2}_G)\\
    \text{iff}&p~\le~
    \sem{\theta}_{(\theta,s)\in\Sigma\times\Pow(\Sigma)}\to
    \bigsem{{\dbigwedge}\bigl\{g_2(z)~:~
      z\in\<g_1,g_3\>^{-1}(\theta,s)
      \bigr\}}_{(\theta,s)\in\Sigma\times\Pow(\Sigma)}\\
    \text{iff}&p~\le~
    \sem{\theta}_{(\theta,s)\in\Sigma\times\Pow(\Sigma)}\to
    \bigsem{{\dbigwedge}\bigl\{\xi~:~\xi\in s
      \bigr\}}_{(\theta,s)\in\Sigma\times\Pow(\Sigma)}\\
    \text{iff}&p~\le~
    \bigsem{\theta\dimp{\dbigwedge}s}
    _{(\theta,s)\in\Sigma\times\Pow(\Sigma)}\\
  \end{array}$$
  From which we get the desired equality.
\end{proof}

\begin{corollary}\label{c:DistrForallImp}
  Given a set $X$ and two families $\sigma\in\Sigma^X$ and
  $t\in\Pow(\Sigma)^X$, we have:
  $$\textstyle
  \bigsem{{\dbigwedge}\{\sigma_x\dimp\xi~:~\xi\in t_x\}}_{x\in X}
  ~=~\bigsem{\sigma_x\dimp{\dbigwedge}t_x}_{x\in X}$$
\end{corollary}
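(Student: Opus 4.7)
The plan is to derive Corollary \ref{c:DistrForallImp} as an immediate consequence of Proposition \ref{p:DistrForallImp}, by pulling back the identity it provides from $\P(\Sigma\times\Pow(\Sigma))$ down to $\P{X}$ along the pairing map $h := \<\sigma, t\> : X \to \Sigma \times \Pow(\Sigma)$ determined by the given families.

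Concretely, I would introduce the two codes $\tau, \tau' \in \Sigma^{\Sigma \times \Pow(\Sigma)}$ defined pointwise by $\tau(\theta, s) := \dbigwedge\{\theta \dimp \xi : \xi \in s\}$ and $\tau'(\theta, s) := \theta \dimp \dbigwedge s$, so that Proposition \ref{p:DistrForallImp} reads $\sem{\tau}_{\Sigma\times\Pow(\Sigma)} = \sem{\tau'}_{\Sigma\times\Pow(\Sigma)}$ inside $\P(\Sigma\times\Pow(\Sigma))$. Applying the Heyting-algebra morphism $\P{h} : \P(\Sigma\times\Pow(\Sigma)) \to \P{X}$ to both sides preserves this equality, and by the naturality of the decoding map $\sem{\_}$ (the fact established just after the definition of the generic predicate), we have $\P{h}(\sem{\tau}_{\Sigma\times\Pow(\Sigma)}) = \sem{\tau \circ h}_X$, and likewise for $\tau'$. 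A direct computation of the composites then yields $(\tau \circ h)(x) = \dbigwedge\{\sigma_x \dimp \xi : \xi \in t_x\}$ and $(\tau' \circ h)(x) = \sigma_x \dimp \dbigwedge t_x$ for each $x \in X$, which are precisely the two families appearing in the corollary.

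I do not foresee any real obstacle here: the statement is essentially a parameterised restatement of Proposition \ref{p:DistrForallImp}, obtained by specialising the generic pair $(\theta, s) \in \Sigma \times \Pow(\Sigma)$ to the family $(\sigma_x, t_x)_{x \in X}$ through the substitution $\P{h}$ and the naturality of $\sem{\_}$.
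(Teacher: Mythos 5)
Your proposal is correct and coincides with the paper's own proof: the paper likewise applies $\P(\<\sigma,t\>)$ to the identity of Proposition~\ref{p:DistrForallImp} and uses the naturality of the decoding map $\sem{\_}$ to rewrite both sides as the desired families over $X$. No gaps.
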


\begin{proof}
  Indeed, we have
  $$\begin{array}[b]{r@{~{}~}c@{~{}~}l}
    \bigsem{{\dbigwedge}\{\sigma_x\dimp\xi~:~\xi\in t_x\}}_{x\in X}
    &=&\P(\<\sigma,t\>)\bigl(
    \bigsem{{\dbigwedge}\{\theta\dimp\xi~:~\xi\in s\}}
    _{(\theta,s)\in\Sigma\times\Pow(\Sigma)}\bigr) \\      
    &=&\P(\<\sigma,t\>)\bigl(
    \bigsem{\theta\dimp{\dbigwedge}s}
    _{(\theta,s)\in\Sigma\times\Pow(\Sigma)}\bigr) \\
    &=&\bigsem{\sigma_x\dimp{\dbigwedge}t_x}_{x\in X}\\
  \end{array}\eqno\begin{tabular}[b]{r@{}}
  (by Prop.~\ref{p:DistrForallImp})\\
  \mbox{\qedhere}\\
  \end{tabular}$$
\end{proof}

From now on, we consider the inclusion relation
$F:=\{(s,s'):s\subseteq s'\}\subseteq
\Pow(\Sigma)\times\Pow(\Sigma)$ together with the corresponding
projections $f_1:F\to\Pow(\Sigma)$ and $f_2:F\to\Pow(\Sigma)$.
The following proposition expresses that the operators
${\dbigvee}:\Pow(\Sigma)\to\Sigma$ and
${\dbigwedge}:\Pow(\Sigma)\to\Sigma$ are respectively monotonic and
antitonic w.r.t.\ the domain of quantification:
\begin{proposition}\label{p:SubQuantSigma}
  $\bigsem{{\dbigvee}\circ f_1}_F~\le~
  \bigsem{{\dbigvee}\circ f_2}_F$\ \ and\ \
  $\bigsem{{\dbigwedge}\circ f_1}_F~\ge~
  \bigsem{{\dbigwedge}\circ f_2}_F$.
\end{proposition}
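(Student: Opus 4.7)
The intuition is: if $s \subseteq s'$, then every witness of $\dbigvee s$ is also a witness of $\dbigvee s'$, and dually every requirement of $\dbigwedge s'$ is also a requirement of $\dbigwedge s$. I would realise this inside the tripos by unfolding the defining identities $\sem{\dbigvee}_{\Pow(\Sigma)} = \exists{e_2}(\sem{e_1}_E)$ and $\sem{\dbigwedge}_{\Pow(\Sigma)} = \forall{e_2}(\sem{e_1}_E)$ along $f_1$ and $f_2$ via Beck-Chevalley, and then comparing the two resulting expressions through a canonical set inclusion.

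Concretely, I would form the two pullbacks $P_i$ of $e_2 : E \to \Pow(\Sigma)$ along $f_i$ (for $i = 1, 2$), namely
$$P_1 ~=~ \{(\xi,s,s') : \xi \in s \text{ and } s \subseteq s'\}, \qquad P_2 ~=~ \{(\xi,s,s') : s \subseteq s' \text{ and } \xi \in s'\},$$
equipped with their evident projections $p_i : P_i \to E$ and $q_i : P_i \to F$. Beck-Chevalley together with naturality of $\sem{\_}$ then give $\sem{\dbigvee \circ f_i}_F = \exists{q_i}(\sem{e_1 \circ p_i}_{P_i})$ and $\sem{\dbigwedge \circ f_i}_F = \forall{q_i}(\sem{e_1 \circ p_i}_{P_i})$. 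The key combinatorial fact is that there is a tautological inclusion $j : P_1 \hookrightarrow P_2$ (since $\xi \in s$ and $s \subseteq s'$ force $\xi \in s'$) satisfying $q_2 \circ j = q_1$ and $e_1 \circ p_2 \circ j = e_1 \circ p_1$. Functoriality then yields $\exists{q_1} = \exists{q_2} \circ \exists{j}$ and $\forall{q_1} = \forall{q_2} \circ \forall{j}$, while naturality of $\sem{\_}$ gives $\P{j}(\sem{e_1 \circ p_2}_{P_2}) = \sem{e_1 \circ p_1}_{P_1}$.

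Substituting, the $\dbigvee$-inequality reduces to a single application of the counit $\exists{j} \circ \P{j} \le \id$ of the adjunction $\exists{j} \dashv \P{j}$, applied under the monotone map $\exists{q_2}$; the $\dbigwedge$-inequality is strictly dual, using the unit $\id \le \forall{j} \circ \P{j}$ of $\P{j} \dashv \forall{j}$. The only real obstacle is the bookkeeping of the two pullbacks and the inclusion between them; once those diagrams are laid out, both inequalities drop out immediately from the adjunction (co)units.
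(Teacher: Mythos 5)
Your proof is correct, and it takes a genuinely different (though closely related) route from the paper's. The paper also reduces both inequalities to an adjunction (co)unit for a comparison map, but it packages things differently: it introduces a single auxiliary set $G=\{(\xi,\xi',(s,s')):\xi\in s,\ \xi'\in s',\ (s,s')\in F\}$ fibred over $F$ by $g_3$, uses Proposition~\ref{p:QuantSigma} (rather than Beck--Chevalley directly) to rewrite $\bigsem{{\dbigvee}\circ f_i}_F$ as $\exists{g_3}(\sem{g_i}_G)$ and $\bigsem{{\dbigwedge}\circ f_i}_F$ as $\forall{g_3}(\sem{g_i}_G)$, and then exploits the diagonal endomap $g:(\xi,\xi',(s,s'))\mapsto(\xi,\xi,(s,s'))$ --- well defined precisely because $\xi\in s\subseteq s'$ --- together with $g_3\circ g=g_3$, $g_2\circ g=g_1$ and the resulting $\exists{g}(\sem{g_1}_G)\le\sem{g_2}_G\le\forall{g}(\sem{g_1}_G)$. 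You instead build the two pullbacks $P_1,P_2$ of $e_2$ along $f_1,f_2$ separately, apply Beck--Chevalley to each, and compare them via the tautological inclusion $j:P_1\hookrightarrow P_2$ over $F$, finishing with $\exists{j}\circ\P{j}\le\id\le\forall{j}\circ\P{j}$. The underlying mechanism is the same, so neither argument is more elementary; a small advantage of your version is that the fibres of $q_i$ over $(s,s')$ are literally $f_i(s,s')$, so the identifications $\bigsem{{\dbigvee}\circ f_i}_F=\exists{q_i}(\sem{e_1\circ p_i}_{P_i})$ hold on the nose, whereas in the paper's set $G$ the fibre over a pair $(\varnothing,s')$ with $s'\neq\varnothing$ is empty, so identifying $\exists{g_3}(\sem{g_2}_G)$ with $\bigsem{{\dbigvee}\circ f_2}_F$ requires an extra word about that degenerate case. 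Both approaches are sound; yours is, if anything, slightly tighter in the bookkeeping.
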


\begin{proof}
  Let us consider the set
  $G:=\{(\xi,\xi',(s,s')):\xi\in s,~\xi'\in s',~(s,s')\in F\}
  \subseteq\Sigma\times\Sigma\times F$
  equipped with the three projections $g_1,g_2:G\to\Sigma$ and
  $g_3:G\to F$.
  We have
  $$\begin{array}{r@{~{}~}c@{~{}~}l}
    \bigsem{{\dbigvee}\circ f_1}_F
    &=&\bigsem{{\dbigvee}
      \bigl\{g_1(z):z\in g_3^{-1}(s,s')\bigr\}}_{(s,s')\in F}
    ~=~\exists{g_3}(\sem{g_1}_G) \\
    \bigsem{{\dbigvee}\circ f_2}_F
    &=&\bigsem{{\dbigvee}
      \bigl\{g_2(z):z\in g_3^{-1}(s,s')\bigr\}}_{(s,s')\in F}
    ~=~\exists{g_3}(\sem{g_2}_G) \\
    \bigsem{{\dbigwedge}\circ f_1}_F
    &=&\bigsem{{\dbigwedge}
      \bigl\{g_1(z):z\in g_3^{-1}(s,s')\bigr\}}_{(s,s')\in F}
    ~=~\forall{g_3}(\sem{g_1}_G) \\
    \bigsem{{\dbigwedge}\circ f_1}_F
    &=&\bigsem{{\dbigwedge}
      \bigl\{g_2(z):z\in g_3^{-1}(s,s')\bigr\}}_{(s,s')\in F}
    ~=~\forall{g_3}(\sem{g_2}_G) \\
  \end{array}$$
  Let us now consider the function $g:G\to G$ defined by
  $g(\xi,\xi',(s,s'))=(\xi,\xi,(s,s'))$ for all
  $(\xi,\xi',(s,s'))\in G$.
  Since $g_2\circ g=g_1$, we have
  $\P{g}(\sem{g_2}_G)=\sem{g_2\circ g}_G=\sem{g_1}_G$ and
  thus
  $$\exists{g}(\sem{g_1}_G)~\le~\sem{g_2}_G
  ~\le~\forall{g}(\sem{g_1}_G)
  \eqno\text{(by left/right adjunction)}$$
  Combining the above inequalities with the fact that
  $g_3\circ g=g_3$, we deduce that:
  $$\begin{array}[b]{l}
    \bigsem{{\dbigvee}\circ f_1}_F
    ~=~\exists{g_3}(\sem{g_1}_G)
    ~=~\exists{g_3}(\exists{g}(\sem{g_1}_G))
    ~\le~\exists{g_3}(\sem{g_2}_G)
    ~=~\bigsem{{\dbigvee}\circ f_2}_F\\
    \bigsem{{\dbigwedge}\circ f_1}_F
    ~=~\forall{g_3}(\sem{g_1}_G)
    ~=~\forall{g_3}(\forall{g}(\sem{g_1}_G))
    ~\ge~\forall{g_3}(\sem{g_2}_G)
    ~=~\bigsem{{\dbigwedge}\circ f_2}_F\,.\\
  \end{array}\eqno\mbox{\qedhere}$$
\end{proof}

\begin{corollary}\label{c:SubQuantSigma}
  Given a set~$X$ and two families $a,b\in\Pow(\Sigma)^X$ such
  that $a_x\subseteq b_x$ for all $x\in X$, we have:
  $\bigsem{{\dbigvee}\circ a}_X~\le~
  \bigsem{{\dbigvee}\circ b}_X$\ \ and\ \
  $\bigsem{{\dbigwedge}\circ a}_X~\ge~
  \bigsem{{\dbigwedge}\circ b}_X$.
\end{corollary}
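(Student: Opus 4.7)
The plan is to reduce the corollary to Proposition~\ref{p:SubQuantSigma} by pulling back along a single map $h:X\to F$ that encodes the family of inclusions $a_x\subseteq b_x$. Since for each $x\in X$ we have $(a_x,b_x)\in F$ by hypothesis, the assignment $h(x):=(a_x,b_x)$ defines a bona fide function $h:X\to F$, and by construction $f_1\circ h=a$ and $f_2\circ h=b$.

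Next, I would use naturality of the decoding map $\sem{\_}_{\_}$ to rewrite both sides of the desired inequalities in terms of $\P{h}$ applied to the predicates appearing in Proposition~\ref{p:SubQuantSigma}. Specifically,
$$\bigsem{{\dbigvee}\circ a}_X~=~\bigsem{({\dbigvee}\circ f_1)\circ h}_X
~=~\P{h}\bigl(\bigsem{{\dbigvee}\circ f_1}_F\bigr),$$
and analogously $\bigsem{{\dbigvee}\circ b}_X=\P{h}\bigl(\bigsem{{\dbigvee}\circ f_2}_F\bigr)$, and similarly with $\dbigwedge$ replacing $\dbigvee$.

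Finally, since $\P{h}:\P{F}\to\P{X}$ is a morphism of Heyting algebras, it is monotonic. Applying it to the two inequalities
$\bigsem{{\dbigvee}\circ f_1}_F\le\bigsem{{\dbigvee}\circ f_2}_F$ and $\bigsem{{\dbigwedge}\circ f_1}_F\ge\bigsem{{\dbigwedge}\circ f_2}_F$ supplied by Proposition~\ref{p:SubQuantSigma}, and using the rewriting above, yields the two desired inequalities in $\P{X}$.

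There is no real obstacle here: once one notices that the families $a,b\in\Pow(\Sigma)^X$ can be packaged as a single map into the inclusion relation $F$, the corollary is a direct instance of the proposition via the naturality square for $\sem{\_}_{\_}$ and monotonicity of $\P{h}$. The only point that requires care is checking that $h$ really does land in $F$, which is precisely the pointwise assumption $a_x\subseteq b_x$.
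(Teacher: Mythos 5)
Your proof is correct and is essentially identical to the paper's own argument: the paper also packages the two families as a single map $c:=(a_x,b_x)_{x\in X}:X\to F$, rewrites both sides via naturality of the decoding map as $\P{c}$ applied to the predicates of Proposition~\ref{p:SubQuantSigma}, and concludes by monotonicity of $\P{c}$.
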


\begin{proof}
  Let $c:=(a_x,b_x)_{x\in X}\in F^X$.
  From Prop.~\ref{p:SubQuantSigma} we get:
  $$\begin{array}[b]{@{}l@{}}
    \bigsem{{\dbigvee}\circ a}_X
    =\bigsem{{\dbigvee}\circ f_1\circ c}_X
    =\P{c}\bigl(\bigsem{{\dbigvee}\circ f_1}\bigr)
    \le\P{c}\bigl(\bigsem{{\dbigvee}\circ f_2}\bigr)
    =\bigsem{{\dbigvee}\circ f_2\circ c}_X
    =\bigsem{{\dbigvee}\circ b}_X \\
    \bigsem{{\dbigwedge}\circ a}_X
    =\bigsem{{\dbigwedge}\circ f_1\circ c}_X
    =\P{c}\bigl(\bigsem{{\dbigwedge}\circ f_1}\bigr)
    \ge\P{c}\bigl(\bigsem{{\dbigwedge}\circ f_2}\bigr)
    =\bigsem{{\dbigwedge}\circ f_2\circ c}_X
    =\bigsem{{\dbigwedge}\circ b}_X \\
  \end{array}\eqno\mbox{\qedhere}$$
\end{proof}

\section{Extracting the implicative algebra}
\label{s:ImpAlg}

In Section~\ref{ss:DefFilterSigma}, we have seen that the structure of
the tripos $\P:\Set^{\op}\to\HA$ can be fully characterized from the
binary operations
$({\dand}),({\dor}),({\dimp}):\Sigma\times\Sigma\to\Sigma$
and the infinitary operations
$({\dbigvee}),({\dbigwedge}):\Pow(\Sigma)\to\Sigma$ via some subset
$\Phi\subseteq\Sigma$ (the `pseudo-filter').
However, these operations fail to endow the set~$\Sigma$ itself with
the structure of a complete Heyting algebra, for they lack the most
basic algebraic properties that are required by such a structure.

In this section, we shall construct a particular implicative structure
$\A=(\A,{\cle},{\to})$ from the set~$\Sigma$ equipped with the only
operations $({\dimp}):\Sigma\times\Sigma\to\Sigma$ and
$({\dbigwedge}):\Pow(\Sigma)\to\Sigma$, using a construction that is
reminiscent from the construction of graph models~\cite{Eng81}.
As we shall see, the main interest of such a construction is that:
\begin{enumerate}
\item The carrier set $\A$ can be used as an alternative set of
  propositions, for it possesses its own generic predicate
  $\tr_{\A}\in\P\A$.
\item The two operations $({\to}):\A\times\A\to\A$ and
  $({\bigmeet}):\Pow(\A)\to\A$ that naturally come with the
  implicative structure~$\A$ constitute codes (in the sense of the
  new generic predicate $\tr_{\A}\in\P\A$) for implication and
  universal quantification in the tripos~$\P$.
\item The ordering on each Heyting algebra $\P{X}$ (for $X\in\Set$)
  can be characterized from the above two operations via a particular
  separator $S\subseteq\A$.
\end{enumerate}
From the above properties, we shall easily conclude that the
tripos~$\P:\Set^{\op}\to\HA$ is isomorphic to the tripos induced by
the implicative algebra $(\A,{\cle},{\to},S)$.

\subsection{Defining the set $\A_0$ of atoms}
\label{ss:Atoms}
The construction of the implicative structure~$\A$ is achieved in two
steps.
First we construct from the set~$\Sigma$ of propositions a (larger)
set $\A_0$ of \emph{atoms} equipped with a preorder~$\le$.
Then we let $\A:=\Powup(\A_0)$, where $\Powup(\A_0)$ denotes the set
of all upwards closed subsets of~$\A_0$ w.r.t.\ the preorder~$\le$.
Formally, the set $\A_0$ of atoms is inductively defined from the
following two clauses:
\begin{enumerate}
\item If $\xi\in\Sigma$, then $\dot\xi\in\A_0$%
  \footnote{Here, we use the dot notation $\dot\xi$ only to
    distinguish the code $\xi\in\Sigma$ from its image
    $\dot\xi\in\A_0$.}\quad (base case).
\item If $s\in\Pow(\Sigma)$ and $\alpha\in\A_0$, then
  $(s\mapsto\alpha)\in\A_0$\quad (inductive case).
\end{enumerate}
In other words, each atom $\alpha\in\A_0$ is a finite list of subsets
of~$\Sigma$ terminated by an element of~$\Sigma$, that is:\ \
$\alpha=s_1\mapsto\cdots\mapsto s_n\mapsto\dot\xi$\ \
for some $s_1,\ldots,s_n\in\Pow(\Sigma)$ and $\xi\in\Sigma$.
The set $\A_0$ is equipped with the binary relation $\alpha\le\beta$
that is inductively defined from the two rules
$$\infer{\dot\xi\le\dot\xi}{}\qquad\qquad
\infer{s\mapsto\alpha~\le~s'\mapsto\alpha'}{
  s\subseteq s'&\quad \alpha\le\alpha'
}$$

\begin{fact}
  The relation $\alpha\le\beta$ is a preorder on $\A_0$.
\end{fact}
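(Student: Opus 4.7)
The plan is to verify reflexivity and transitivity of $\le$ by straightforward structural inductions on atoms, the only subtlety being an initial observation that the two inference rules force related atoms to have the same ``shape''.

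For reflexivity, I would induct on the atom $\alpha\in\A_0$. In the base case $\alpha=\dot\xi$, the first rule directly gives $\dot\xi\le\dot\xi$. In the inductive case $\alpha=s\mapsto\alpha'$, the inductive hypothesis yields $\alpha'\le\alpha'$, which combined with $s\subseteq s$ and the second rule gives $s\mapsto\alpha'\le s\mapsto\alpha'$.

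Before tackling transitivity, I would record the easy observation (proved by induction on the derivation of $\alpha\le\beta$) that whenever $\alpha\le\beta$ is derivable, the atoms $\alpha$ and $\beta$ have the same length and the same terminal symbol $\dot\xi$; that is, if $\alpha=s_1\mapsto\cdots\mapsto s_n\mapsto\dot\xi$, then necessarily $\beta=s'_1\mapsto\cdots\mapsto s'_n\mapsto\dot\xi$ with $s_i\subseteq s'_i$ for each $i=1,\ldots,n$. This immediately reduces both rules to the coordinate-wise inclusion statement.

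For transitivity, suppose $\alpha\le\beta$ and $\beta\le\gamma$. I would induct on $\alpha$ (equivalently, on the common length). If $\alpha=\dot\xi$, then by the shape observation $\beta=\gamma=\dot\xi$ and the axiom applies. If $\alpha=s\mapsto\alpha'$, inverting the two derivations gives $\beta=t\mapsto\beta'$ with $s\subseteq t$ and $\alpha'\le\beta'$, and $\gamma=u\mapsto\gamma'$ with $t\subseteq u$ and $\beta'\le\gamma'$; transitivity of $\subseteq$ on $\Pow(\Sigma)$ yields $s\subseteq u$, the inductive hypothesis yields $\alpha'\le\gamma'$, and a single application of the second rule concludes $\alpha\le\gamma$. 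The only place one has to be careful is in performing this inversion cleanly, but the shape lemma above makes it automatic since there is exactly one rule applicable to each shape of conclusion. No step poses a genuine obstacle; the argument is entirely routine.
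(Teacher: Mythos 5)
Your proof is correct and follows essentially the same route as the paper, which simply proves reflexivity and transitivity by induction on $\alpha$; your explicit ``shape/inversion'' observation is a harmless elaboration of the inversion step that the paper leaves implicit.
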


\begin{proof}
  By induction on $\alpha\in\A_0$, we successively prove
  (1) that $\alpha\le\alpha$ and
  (2) that $\alpha\le\beta$ and $\beta\le\gamma$ together imply that
  $\alpha\le\gamma$, for all $\beta,\gamma\in\A_0$.
\end{proof}

Finally, the set $\A_0$ is equipped with a conversion function
$\phi_0:\A_0\to\Sigma$ that is defined by
$$\phi_0(\dot\xi)~:=~\xi\qquad\text{and}\qquad
\phi_0(s\mapsto\alpha)~:=~
\bigl({\textstyle\dbigwedge{s}}\bigr)\dimp\phi_0(\alpha)$$
(Note that by construction, this function is surjective.)

\begin{remark}[Relationship with graph models]
  In the theory of graph models~\cite{Eng81}, the set of atoms~$\A_0$
  would be naturally defined from the grammar 
  $$\alpha,\beta\in\A_0\quad::=\quad
  \dot{\xi}\quad|\quad\{\alpha_1,\ldots,\alpha_n\}\mapsto\beta
  \eqno(\xi\in\Sigma)$$
  that is, as the least solution of the set-theoretic equation
  $\A_0=\Sigma+\Powfin(\A_0)\times\A_0$.
  However, such a construction would yield an \emph{applicative}
  structure upon the set $\A=\Powup(\A_0)$---it would even constitute
  a ($D_{\infty}$-like) model of the $\lambda$-calculus---, but it
  would not yield an \emph{implicative} structure, for the restriction
  to the finite subsets of $\A_0$ in the left-hand side of the
  construction $\{a_1,\ldots,\alpha_n\}\mapsto\beta$ actually prevents
  defining an implication with the desired properties.\par
  To fix this problem, it would be natural to relax the condition of
  finiteness, by considering instead the equation
  $\A_0=\Sigma+\Pow(\A_0)\times\A_0$.
  Alas, such an equation has no solution (for obvious cardinality
  reasons), so that the trick consists to replace arbitrary subsets of
  $\A_0$ (in the left-hand side of the construction $s\mapsto\beta$)
  by arbitrary subsets of~$\Sigma$, using the fact that the subsets
  of~$\A_0$ can always be converted (element-wise) into subsets
  of~$\Sigma$, via the conversion function $\phi_0:\A_0\to\Sigma$.
  So that in the end, we obtain the set-theoretic equation
  $\A_0=\Sigma+\Pow(\Sigma)\times\A_0$, whose least solution is
  precisely the set $\A_0$ we defined above.
\end{remark}

\subsection{Defining the implicative structure $(\A,{\cle},{\to})$}
\label{ss:ImpStruct}
Let $\A:=\Powup(\A_0)$ be the set of upwards closed subsets of~$\A_0$
(w.r.t.\ the preorder $\le$ on~$\A_0$), equipped with the ordering
$a\cle b$ defined by $a\cle b$ iff $a\supseteq b$ (reverse inclusion)
for all $a,b\in\A$.
It is clear that:
\begin{proposition}
  The poset $(\A,{\cle})=(\Powup(\A_0),{\supseteq})$ is a
  complete lattice.
\end{proposition}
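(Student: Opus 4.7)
The plan is to observe that this is an instance of the general fact that the upward-closed subsets of a preorder form a complete lattice under reverse inclusion. Concretely, for any family $\{a_i\}_{i\in I}\subseteq\A=\Powup(\A_0)$, I would exhibit its meet and join with respect to $\cle$ as a set-theoretic union and intersection respectively, after first checking that these operations preserve upward-closure.

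First I would verify the two closure facts: given an arbitrary family $\{a_i\}_{i\in I}$ of upward-closed subsets of $\A_0$, both $\bigcup_{i\in I}a_i$ and $\bigcap_{i\in I}a_i$ are upward-closed in $(\A_0,{\le})$. For the union, if $\alpha\in a_{i_0}$ and $\alpha\le\beta$, then $\beta\in a_{i_0}\subseteq\bigcup_ia_i$; the intersection case is analogous. Hence both sets belong to $\A$.

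Next, unpacking the definition $a\cle b\liff a\supseteq b$, the set $\bigcup_ia_i$ is the $\supseteq$-smallest upward-closed set contained in every $a_i$, which is precisely the greatest lower bound $\bigmeet_{i\in I}a_i$ in $(\A,{\cle})$; dually $\bigcap_ia_i=\bigjoin_{i\in I}a_i$. Specialising to $I=\emptyset$ gives a top $\top_\A=\emptyset$ and a bottom $\bot_\A=\A_0$, so $(\A,{\cle})$ has arbitrary meets and joins, which is the defining property of a complete lattice.

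There is no genuine obstacle here: the statement is really just the observation that $\Powup(\A_0)$ is closed under arbitrary set-theoretic unions and intersections, combined with the standard fact that reversing the order turns suprema into infima. The slightly subtle point worth spelling out is the reversal of orientation between $\supseteq$ and $\cle$, which swaps the roles of $\bigcup$ and $\bigcap$ in the computation of $\bigmeet$ and $\bigjoin$.
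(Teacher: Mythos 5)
Your proof is correct and is exactly the argument the paper has in mind: the paper states this proposition without proof (``It is clear that\dots'') and merely remarks afterwards that meets and joins in $(\A,{\cle})$ are given by unions and intersections, with $\top_{\A}=\varnothing$ and $\bot_{\A}=\A_0$, which is precisely what you establish. (One verbal slip: $\bigcup_i a_i$ is the $\subseteq$-smallest upward-closed set \emph{containing} every $a_i$, not ``contained in'' every $a_i$; your conclusion $\bigmeet_{i}a_i=\bigcup_i a_i$ is nevertheless right.)
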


Note that in this complete lattice, (finitary or infinitary) meets and
joins are respectively given by unions and intersections.
In particular, we have $\bot_{\A}=\A$ and $\top_{\A}=\varnothing$.

Let $\tphi_0:\A\to\Pow(\Sigma)$ be the function defined by
$\tphi_0(a)=\{\phi_0(\alpha):\alpha\in a\}$ for all $a\in\A$.
For each set of codes $s\in\Pow(\Sigma)$, we write
$s^{\subseteq}:=\{s'\in\Pow(\Sigma):s\subseteq s'\}$.
We now equip the complete lattice $(\A,{\cle})$ with the implication
defined by
$$a\to b~:=~\bigl\{s\mapsto\beta~:~
s\in\tphi_0(a)^{\subseteq},~\beta\in b\bigr\}
\eqno(\text{for all}~a,b\in\A)$$
Note that by construction, we have
$(a\to b)\in\A~({=}~\Powup(\A_0))$ for all $a,b\in\A$.

\begin{proposition}
  The triple $(\A,{\cle},{\to})$ is an implicative structure.
\end{proposition}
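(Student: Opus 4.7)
The plan is to verify, in order, the three requirements of an implicative structure: completeness of $(\A,{\cle})$ (already granted by the preceding proposition), antitonicity/monotonicity of ${\to}$ in its two arguments, and distributivity of ${\to}$ over arbitrary meets on the right. Before anything else, I would want a lemma, used tacitly throughout, asserting that $a\to b$ is indeed upwards closed, so that it really belongs to $\A=\Powup(\A_0)$: if $s\mapsto\beta\in a\to b$ and $s\mapsto\beta\le\gamma$ in $\A_0$, then by inversion on the definition of $\le$ the atom $\gamma$ must be of the form $s'\mapsto\beta'$ with $s\subseteq s'$ and $\beta\le\beta'$; so $s'\in\tphi_0(a)^{\subseteq}$ (since $\tphi_0(a)\subseteq s\subseteq s'$) and $\beta'\in b$ (since $b$ is upwards closed), hence $\gamma=s'\mapsto\beta'\in a\to b$.

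For the variance condition, I would unfold $\cle$ as reverse inclusion and argue pointwise. Assume $a'\cle a$ and $b\cle b'$, i.e.\ $a'\supseteq a$ and $b\supseteq b'$. First, the image $\tphi_0$ is monotonic for $\subseteq$, so $\tphi_0(a')\supseteq\tphi_0(a)$, from which any $s\supseteq\tphi_0(a')$ automatically satisfies $s\supseteq\tphi_0(a)$; this gives $\tphi_0(a')^{\subseteq}\subseteq\tphi_0(a)^{\subseteq}$. Second, $b'\subseteq b$. Hence every generator $s\mapsto\beta$ of $a'\to b'$ (with $s\in\tphi_0(a')^{\subseteq}$ and $\beta\in b'$) is also a generator of $a\to b$, so $(a'\to b')\subseteq(a\to b)$, which is exactly $(a\to b)\cle(a'\to b')$.

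For the distributivity condition, the essential observation is that, since $\cle$ is reverse inclusion, arbitrary meets in $\A$ are computed as unions: $\bigmeet_{b\in B}b=\bigcup_{b\in B}b$. Then a direct calculation gives
\[
a\to\bigmeet_{b\in B}b
~=~\bigl\{s\mapsto\beta:s\in\tphi_0(a)^{\subseteq},~\beta\in{\textstyle\bigcup_{b\in B}}b\bigr\}
~=~\bigcup_{b\in B}(a\to b)
~=~\bigmeet_{b\in B}(a\to b),
\]
where the middle equality just redistributes the existential over $\beta$. No interaction with the ordering on atoms is needed here, which is why the identity holds on the nose.

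I do not expect a real obstacle: the construction of $\to$ has been arranged precisely so that meets on the right become unions of generators, and $\tphi_0(a)^{\subseteq}$ is the standard device to make $\to$ antitonic in its left argument while keeping $a\to b$ upwards closed. The only point demanding a moment of care is the upward closure check, because the ordering on $\A_0$ simultaneously weakens the ``domain set'' via $\subseteq$ and the ``codomain atom'' via $\le$; once that verification is in hand, both remaining items reduce to unfolding definitions.
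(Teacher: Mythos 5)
Your proof is correct and follows essentially the same route as the paper: variance is checked by a pointwise inclusion of generators, and distributivity reduces to the fact that meets in $(\Powup(\A_0),{\supseteq})$ are unions. The only difference is that you explicitly verify the upward closure of $a\to b$ (which the paper merely asserts ``by construction''), and your inversion argument for that step is sound.
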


\begin{proof}
  \emph{Variance of implication.}\quad
  Let $a,a',b,b'\in\A$ such that $a'\cle a$ and $b\cle b$, that is:
  $a\subseteq a'$ and $b'\subseteq b$.
  We observe that
  $$\begin{array}{r@{~{}~}c@{~{}~}l}
    a'\to b'&=&\bigl\{s\mapsto\beta~:~
    s\in\tphi_0(a')^{\subseteq},~\beta\in b'\bigr\}\\
    &\subseteq&\bigl\{s\mapsto\beta~:~
    s\in\tphi_0(a)^{\subseteq},~\beta\in b\bigr\}
    ~=~a\to b \\
  \end{array}$$
  (since $\tphi_0(a)\subseteq\tphi_0(a')$ and $b'\subseteq b$), which
  means that $(a\to b)\cle(a'\to b')$.\par\noindent
  \emph{Distributivity of meets w.r.t.\ $\to$.}\quad
  Given $a\in\A$ and $B\subseteq\A$, we observe that
  $$\begin{array}[b]{r@{~{}~}c@{~{}~}l}
    a\to\bigmeet\!B
    &=&\ds\bigl\{s\mapsto\beta~:~
    s\in\tphi_0(a)^{\subseteq},~
    \beta\in{\textstyle\bigcup}B\bigr\}\\[6pt]
    &=&\ds\bigcup_{b\in B}\bigl\{s\mapsto\beta~:~
    s\in\tphi_0(a)^{\subseteq},~\beta\in b\bigr\}
    ~=~\bigmeet_{\!\!b\in B\!\!}(a\to b) \\
  \end{array}\eqno\mbox{\qedhere}$$
\end{proof}

\subsection{Viewing $\A$ as a new set of propositions}
Let us now consider the two conversion functions $\phi:\A\to\Sigma$
and $\psi:\Sigma\to\A$ defined by
$$\begin{array}{r@{~{}~}c@{~{}~}l}
  \phi(a)&:=&\dbigwedge\tphi_0(a)
  ~=~\dbigwedge\{\phi_0(\alpha):\alpha\in a\}\\[3pt]
  \psi(\xi)&:=&\{\dot\xi\}\\
\end{array}\eqno\begin{array}{r@{}}
(\text{for all}~a\in\A)\\[3pt]
(\text{for all}~\xi\in\Sigma)\\
\end{array}$$
as well as the predicate
$\tr_{\A}:=\P\phi(\tr_{\Sigma})~({=}~\sem{\phi}_{\A})\in\P\A$.\ \
We easily check that:
\begin{lemma}
  $\P\psi(\tr_{\A})=\tr_{\Sigma}$.
\end{lemma}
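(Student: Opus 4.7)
The plan is to unfold the definition of $\P\psi(\tr_{\A})$ by composing with $\phi$, reducing the claim to a statement about the singleton-indexed operator $\dbigwedge$ which is then settled by Proposition~\ref{p:QuantSigma}. Concretely, since $\tr_{\A}=\P\phi(\tr_{\Sigma})$ by definition, functoriality of $\P$ gives
$$\P\psi(\tr_{\A})~=~\P\psi(\P\phi(\tr_{\Sigma}))~=~\P(\phi\circ\psi)(\tr_{\Sigma})~=~\sem{\phi\circ\psi}_{\Sigma}\,.$$

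Next, I would compute the composite $\phi\circ\psi:\Sigma\to\Sigma$ pointwise. For each $\xi\in\Sigma$, the definitions of $\psi$, $\tphi_0$, $\phi_0$ and $\phi$ give successively
$$\psi(\xi)=\{\dot\xi\},\qquad \tphi_0(\{\dot\xi\})=\{\phi_0(\dot\xi)\}=\{\xi\},\qquad \phi(\psi(\xi))=\dbigwedge\{\xi\}\,.$$
Hence the statement to prove reduces to $\sem{\dbigwedge\{\xi\}}_{\xi\in\Sigma}=\tr_{\Sigma}$.

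To establish this last equality, I apply Proposition~\ref{p:QuantSigma} with $X=Y=\Sigma$, with the map $f:=\id_{\Sigma}$ and with the code $\sigma:=\id_{\Sigma}\in\Sigma^{\Sigma}$ (so $\sigma_x=x$). Since $f^{-1}(y)=\{y\}$ for every $y\in\Sigma$, the proposition yields
$$\bigsem{\dbigwedge\{y\}}_{y\in\Sigma}~=~\forall\id_{\Sigma}\bigl(\sem{\id_{\Sigma}}_{\Sigma}\bigr)~=~\sem{\id_{\Sigma}}_{\Sigma}~=~\tr_{\Sigma}\,,$$
where I use Lemma~\ref{l:SimplAdj}~(1) (or the functoriality of $\forall$ recalled in Remark~(1) following Definition~\ref{d:Tripos}) to collapse $\forall\id_{\Sigma}$ to $\id_{\P\Sigma}$, together with $\sem{\id_{\Sigma}}_{\Sigma}=\P\id_{\Sigma}(\tr_{\Sigma})=\tr_{\Sigma}$.

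There is no genuine obstacle: everything is a direct unfolding of definitions combined with the already-proved Proposition~\ref{p:QuantSigma}. The only point requiring care is the bookkeeping between the symbol $\xi\in\Sigma$ (an element of the generic set) and the code $\id_{\Sigma}\in\Sigma^{\Sigma}$ used to invoke the decoding map; once this is kept straight, the chain of equalities is immediate.
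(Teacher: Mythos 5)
Your proof is correct and follows essentially the same route as the paper: reduce to $\sem{\phi\circ\psi}_{\Sigma}=\bigsem{\dbigwedge\{\xi\}}_{\xi\in\Sigma}$ and then recognize this as $\forall\id_{\Sigma}(\sem{\id_{\Sigma}}_{\Sigma})=\tr_{\Sigma}$ via Proposition~\ref{p:QuantSigma} applied to $f=\sigma=\id_{\Sigma}$. The only difference is cosmetic: the paper leaves the appeal to Proposition~\ref{p:QuantSigma} implicit by rewriting $\{\xi\}$ as $\{\id(\xi'):\xi'\in\id^{-1}(\xi)\}$, whereas you cite it explicitly.
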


\begin{proof}
  Since $\phi(\psi(\xi))={\dbigwedge}\{\xi\}$ for all $\xi\in\Sigma$,
  we have:
  $$\begin{array}[b]{r@{~{}~}c@{~{}~}l}
    \P\psi(\tr_{\A})
    &=&\P\psi(\sem{\phi}_{\A})
    ~=~\sem{\phi\circ\psi}_{\Sigma}
    ~=~\bigsem{\dbigwedge\{\xi\}}_{\xi\in\Sigma}
    ~=~\bigsem{\dbigwedge\bigl\{\id(\xi'):
      \xi'\in\id^{-1}(\xi)\bigr\}}_{\xi\in\Sigma}\\
    &=&\forall\id_{\Sigma}\bigl(\sem{\id_{\Sigma}}_{\Sigma}\bigr)
    ~=~\sem{\id_{\Sigma}}_{\Sigma}
    ~=~\P\id_{\Sigma}(\tr_{\Sigma})~=~\tr_{\Sigma}\,.
  \end{array}\eqno\mbox{\qedhere}$$
\end{proof}
Therefore:
\begin{proposition}
  The predicate $\tr_{\A}\in\P\A$ is a generic predicate for the
  tripos~$\P$.
\end{proposition}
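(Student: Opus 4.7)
The plan is to use the lemma just established ($\P\psi(\tr_{\A}) = \tr_{\Sigma}$) together with the fact that $\tr_{\Sigma}$ is already known to be a generic predicate for $\P$. Concretely, given any set $X$ and any predicate $p \in \P{X}$, I need to exhibit a code $\sigma' \in \A^X$ such that $\P\sigma'(\tr_{\A}) = p$.

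First, since $\tr_{\Sigma} \in \P\Sigma$ is a generic predicate, the decoding map $\sem{\_}_X : \Sigma^X \to \P{X}$ is surjective. So I can pick a $\sigma \in \Sigma^X$ with $\P\sigma(\tr_{\Sigma}) = p$. Next, I would define $\sigma' := \psi \circ \sigma \in \A^X$, which simply lifts each code $\xi \in \Sigma$ to the singleton upwards closure $\{\dot\xi\} \in \A$. Then by functoriality of $\P$ and the lemma:
$$\P\sigma'(\tr_{\A}) \;=\; \P(\psi \circ \sigma)(\tr_{\A}) \;=\; \P\sigma(\P\psi(\tr_{\A})) \;=\; \P\sigma(\tr_{\Sigma}) \;=\; p\,.$$
This shows that the decoding map $\A^X \to \P{X}$ associated with $\tr_{\A}$ is surjective, which is exactly the condition for $\tr_{\A}$ to be a generic predicate.

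There is essentially no obstacle here: the whole content of the proposition has already been packaged into the previous lemma. The argument is just the standard observation that if one generic predicate factors through another via some map $\psi$ between the respective sets of codes, then the latter is generic too (because one can transport surjectivity of decoding back along precomposition with $\psi$). This is precisely the transfer mechanism mentioned earlier in the paper's discussion of non-uniqueness of generic predicates, applied in the direction $\Sigma \leadsto \A$.
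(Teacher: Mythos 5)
Your proof is correct and is essentially identical to the paper's: both take a code $\sigma\in\Sigma^X$ for $p$ via surjectivity of $\sem{\_}_X$, set the new code to $\psi\circ\sigma$, and conclude by functoriality together with the lemma $\P\psi(\tr_{\A})=\tr_{\Sigma}$. Nothing further is needed.
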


\begin{proof}
  For each set $X$, we want to show that the function
  $\csem{\_}_X:\A^X\to\P{X}$ defined by $\csem{a}_X=\P{a}(\tr_{\A})$
  for all $a\in\A^X$ is surjective.
  For that, we take $p\in\P{X}$ and pick a code $\sigma\in\Sigma^X$
  such that $p=\sem{\sigma}_X=\P\sigma(\tr_{\Sigma})$.
  From the above lemma, we get:
  $$p~=~\P{\sigma}(\tr_{\Sigma})~=~\P{\sigma}(\P\psi(\tr_{\A}))
  ~=~\P(\psi\circ\sigma)(\tr_{\A})~=~\csem{\psi\circ\sigma}_X$$
  hence $a:=\psi\circ\sigma\in\A^{X}$ is a code for $p$ in the sense
  of the predicate~$\tr_{\A}\in\P\A$.
\end{proof}

To sum up, we now have
two sets of propositions~$\Sigma$ and~$\A$,
two generic predicates $\tr_{\Sigma}\in\P\Sigma$ and
$\tr_{\A}\in\P\A$,
as well as two decoding functions $\sem{\_}_X:\Sigma^X\to\P{X}$ and
$\csem{\_}_X:\A^X\to\P{X}$.
As usual, we write $\phi^X:\A^X\to\Sigma^X$ and
$\psi^X:\Sigma^X\to\A^X$ (for $X\in\Set$) the natural transformations
induced by the two maps $\phi:\A\to\Sigma$ and $\psi:\Sigma\to\A$.
We easily check that:
\begin{proposition}
  For each set~$X$, the following two diagrams commute:
  $$\xymatrix{
    \Sigma^X\ar[r]^{\sem{\_}_X}&\P{X}\ar@{=}[d] \\
    \A^X\ar[u]^{\phi^X}\ar[r]_{\csem{\_}_X}&\P{X} \\
  }\qquad\qquad
  \xymatrix{
    \Sigma^X\ar[d]_{\psi^X}\ar[r]^{\sem{\_}_X}&\P{X}\ar@{=}[d] \\
    \A^X\ar[r]_{\csem{\_}_X}&\P{X} \\
  }$$
\end{proposition}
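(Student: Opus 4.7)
The plan is to unfold both decoding maps from their definitions and reduce each square to a single application of the contravariant functoriality of $\P$, together with the defining equation $\tr_{\A}=\P\phi(\tr_{\Sigma})$ for one square and the preceding lemma $\P\psi(\tr_{\A})=\tr_{\Sigma}$ for the other.

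For the left-hand diagram, commutativity amounts to showing that $\csem{a}_X = \sem{\phi\circ a}_X$ for every $a\in\A^X$ (since $\phi^X(a) = \phi\circ a$ by definition). Starting from $\csem{a}_X = \P a(\tr_{\A})$, I would substitute $\tr_{\A} = \P\phi(\tr_{\Sigma})$ to get $\csem{a}_X = \P a(\P\phi(\tr_{\Sigma}))$, then apply contravariant functoriality of $\P$ to collapse this to $\P(\phi\circ a)(\tr_{\Sigma})$, which is exactly $\sem{\phi\circ a}_X$ by the definition of the original decoding map. That is a two-line computation.

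For the right-hand diagram, commutativity amounts to $\csem{\psi\circ\sigma}_X = \sem{\sigma}_X$ for every $\sigma\in\Sigma^X$. I would again expand: $\csem{\psi\circ\sigma}_X = \P(\psi\circ\sigma)(\tr_{\A}) = \P\sigma(\P\psi(\tr_{\A}))$ by functoriality, and then invoke the lemma $\P\psi(\tr_{\A}) = \tr_{\Sigma}$ proved just above to obtain $\P\sigma(\tr_{\Sigma}) = \sem{\sigma}_X$. No genuine obstacle arises here; the real content has already been absorbed into the definition $\tr_{\A} := \P\phi(\tr_{\Sigma})$ and into the preceding lemma, whose proof was the only place where the specific structure of $\phi$, $\psi$, and $\dbigwedge$ mattered (via Prop.~\ref{p:QuantSigma} applied to the identity map on $\Sigma$). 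The two commutativities are then purely formal.
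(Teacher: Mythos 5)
Your proposal is correct and coincides with the paper's own proof: both squares are reduced, via contravariant functoriality of $\P$, to the definition $\tr_{\A}=\P\phi(\tr_{\Sigma})$ and to the lemma $\P\psi(\tr_{\A})=\tr_{\Sigma}$, respectively. Nothing is missing.
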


\begin{proof}
  For all $a\in\A^X$, we have\ \
  $\sem{\phi^X(a)}_X=\sem{\phi\circ a}_X
  =\P{a}(\P\phi(\tr_{\Sigma}))=\P{a}(\tr_{\A})=\csem{a}_X$.\\
  And for all $\sigma\in\Sigma^X$, we have\ \
  $\csem{\psi^X(\sigma)}_X=\csem{\psi\circ\sigma}
  =\P{\sigma}(\P\psi(\tr_{\A}))=\P{\sigma}(\tr_{\Sigma})
  =\sem{\sigma}_X$.
\end{proof}

\subsection{Universal quantification in~$\A$}
\label{ss:QuantA}
By analogy with the construction performed in
Section~\ref{ss:DefQuantSigma}, we now consider the membership
relation
$E':=\{(a,A):a\in A\}\subseteq\A\times\Pow(\A)$ together with the
corresponding projections $e'_1:E'\to\A$ and $e'_2:E'\to\Pow(\A)$.

The following proposition states that the operator
$({\bigmeet}):\Pow(\A)\to\A$ is a code for universal quantification in
the sense of the generic predicate $\tr_{\A}\in\P\A$:
\begin{proposition}\label{p:QuantA0}
  $\bigcsem{{\bigmeet}A}_{A\in\Pow(\A)}=
  \forall{e'_2}\bigl(\csem{e'_1}_{E'}\bigr)$.
\end{proposition}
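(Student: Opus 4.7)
The strategy is to unwind each side of the claimed equation into a decoded expression indexed by $\Pow(\A)$, and then to reduce the resulting identity to the merging-quantifications lemma.

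\emph{Left-hand side.} Since $(\A,\cle)=(\Powup(\A_0),\supseteq)$, meets in $\A$ are unions, so $\bigmeet A=\bigcup A$ for every $A\in\Pow(\A)$. Combining this with the identity $\csem{\_}_X=\sem{\phi\circ\_}_X$ established in the previous subsection, the definition $\phi(a)=\dbigwedge\tphi_0(a)$, and the obvious distributivity $\tphi_0(\bigcup A)=\bigcup\{\tphi_0(a):a\in A\}$, I obtain
$$\bigcsem{{\bigmeet}A}_{A\in\Pow(\A)}~=~\bigsem{\dbigwedge\bigcup\{\tphi_0(a):a\in A\}}_{A\in\Pow(\A)}.$$

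\emph{Right-hand side.} I rewrite $\csem{e'_1}_{E'}=\sem{\phi\circ e'_1}_{E'}$, where $(\phi\circ e'_1)(a,A)=\phi(a)=\dbigwedge\tphi_0(a)$, and apply Proposition~\ref{p:QuantSigma} with $f:=e'_2:E'\to\Pow(\A)$. Using $(e'_2)^{-1}(A)=\{(a,A):a\in A\}$, this gives
$$\forall{e'_2}(\csem{e'_1}_{E'})~=~\bigsem{\dbigwedge\{\dbigwedge\tphi_0(a):a\in A\}}_{A\in\Pow(\A)}.$$

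\emph{Concluding step.} Finally, I introduce the auxiliary map $h:\Pow(\A)\to\Pow(\Pow(\Sigma))$ defined by $h(A):=\{\tphi_0(a):a\in A\}$. The two unwound expressions above are precisely the images under $\P h$ (via the naturality of the decoding map $\sem{\_}$) of the two sides of the $\dbigwedge$-half of Proposition~\ref{p:MergeQuantSigma}, so they are equal. There is no genuine obstacle here beyond bookkeeping: the only care needed is to keep the different indexing sets ($E'$, $\Pow(\A)$, $\Pow(\Pow(\Sigma))$) properly aligned when translating along $h$.
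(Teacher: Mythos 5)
Your proposal is correct and follows essentially the same route as the paper's proof: both reduce the statement to Proposition~\ref{p:MergeQuantSigma} by translating along the map $A\mapsto\{\tphi_0(a):a\in A\}$ (the paper's $\Pow\tphi_0$, your $h$), using $\bigmeet A=\bigcup A$, $\phi(a)=\dbigwedge\tphi_0(a)$, naturality of the decoding, and Proposition~\ref{p:QuantSigma} for the $\forall e'_2$ side. The only difference is presentational---you unwind both sides and meet in the middle, while the paper writes a single left-to-right chain.
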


\begin{proof}
  Indeed, we have:
  $$\begin{array}[b]{r@{~{}~}c@{~{}~}l@{\hskip-5mm}}
    \bigcsem{{\bigmeet}A}_{A\in\Pow(\A)}
    &=&\bigcsem{{\bigcup}A}_{A\in\Pow(\A)}
    ~=~\bigsem{\phi\bigl({\bigcup}A\bigr)}_{A\in\Pow(\A)}
    ~=~\bigsem{{\dbigwedge}\tphi_0\bigl({\bigcup}A\bigr)}
    _{A\in\Pow(\A)}\\
    &=&\bigsem{{\dbigwedge}{\bigcup}\Pow\tphi_0(A)}_{A\in\Pow(\A)}
    ~=~\P(\Pow\tphi_0)\bigl(\bigsem{{\dbigwedge}{\bigcup}S}
    _{S\in\Pow(\Pow(\Sigma))}\bigr) \\
    &=&\P(\Pow\tphi_0)\bigl(\bigsem{{\dbigwedge}
      \bigl\{{\dbigwedge}s:s\in S\bigr\}}
    _{S\in\Pow(\Pow(\Sigma))}\bigr)\\
    &=&\bigsem{{\dbigwedge}
      \bigl\{{\dbigwedge}\tphi_0(a):a\in A\bigr\}}_{A\in\Pow(\A)}
    ~=~\bigsem{{\dbigwedge}
      \bigl\{\phi(a):a\in A\bigr\}}_{A\in\Pow(\A)}\\
    &=&\bigsem{{\dbigwedge}
      \bigl\{\phi(e'_1(p)):p\in{e'}_2^{-1}(A)\}}_{A\in\Pow(\A)}\\
    &=&\forall{e'_2}\bigl(\sem{\phi\circ e'_1}_{E'}\bigr)
    ~=~\forall{e'_2}\bigl(\csem{e'_1}_{E'}\bigr)\,. \\
  \end{array}\eqno\begin{tabular}[b]{r@{}}
  (by Prop.~\ref{p:MergeQuantSigma})\\\\\\
  \mbox{\qedhere}\\
  \end{tabular}$$
\end{proof}

From the above result, we deduce that:
\begin{proposition}\label{p:QuantA}
  Given a code $a\in\A^X$ and a map $f:X\to Y$, we have:
  $$\textstyle\bigcsem{\bigmeet\{a_x:x\in f^{-1}(y)\}}_{y\in Y}
  ~=~\forall{f}(\csem{a}_X)\eqno({\in}~\P{Y})$$
\end{proposition}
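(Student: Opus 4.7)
The plan is to mimic, almost verbatim, the argument used in the proof of Proposition~\ref{p:QuantSigma}, replacing $\Sigma$ by $\A$ and $\dbigwedge$ by $\bigmeet$, and using Proposition~\ref{p:QuantA0} in place of the defining equation of $\dbigwedge$.

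First, I would define the auxiliary map $h:Y\to\Pow(\A)$ by $h(y):=\{a_x:x\in f^{-1}(y)\}$ for all $y\in Y$. Pulling Proposition~\ref{p:QuantA0} back along $h$ via the naturality of $\csem{\_}$ yields
$$\bigcsem{{\bigmeet}\{a_x:x\in f^{-1}(y)\}}_{y\in Y}
~=~\csem{{\bigmeet}\circ h}_Y
~=~\P{h}\bigl(\bigcsem{{\bigmeet}A}_{A\in\Pow(\A)}\bigr)
~=~(\P{h}\circ\forall{e'_2})\bigl(\csem{e'_1}_{E'}\bigr).$$

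Second, I would introduce the set $G:=\{(a_x,f(x)):x\in X\}\subseteq\A\times Y$ together with the two projections $g:G\to Y$, $g(\alpha,y):=y$, and $g':G\to E'$, $g'(\alpha,y):=(\alpha,h(y))$. A direct check shows that the square formed by $g$, $g'$, $h$ and $e'_2$ is a pullback in~$\Set$ (both $G$ and the pullback of $h$ along $e'_2$ are equal to $\{(\alpha,y):\alpha\in h(y)\}$). Beck-Chevalley then gives $\P{h}\circ\forall{e'_2}=\forall{g}\circ\P{g'}$, so the previous display equals $(\forall{g}\circ\P{g'})(\csem{e'_1}_{E'})=\forall{g}(\csem{e'_1\circ g'}_G)$.

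Third, I would consider the surjection $q:X\to G$ defined by $q(x):=(a_x,f(x))$; since it is surjective it admits a right inverse by~(AC), and Lemma~\ref{l:SimplAdj}(2) yields $\forall{q}\circ\P{q}=\id_{\P{G}}$. Inserting this identity and using functoriality of $\forall$ gives
$$\forall{g}\bigl(\csem{e'_1\circ g'}_G\bigr)
~=~\forall(g\circ q)\bigl(\csem{e'_1\circ g'\circ q}_X\bigr).$$
Finally, the identifications $g\circ q=f$ and $e'_1\circ g'\circ q=a$ (immediate from the definitions of $g$, $g'$ and $q$) give the desired equality $\forall{f}(\csem{a}_X)$.

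There is no real obstacle: every ingredient is already in place. The only thing worth double-checking is the verification that the square $(g,g',h,e'_2)$ is indeed a pullback and that $q$ is surjective—both are elementary, and they are precisely the steps that were verified in the $\Sigma$-analogue.
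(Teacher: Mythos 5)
Your proof is correct and is exactly what the paper intends: its proof of this proposition is literally ``Same argument as for Prop.~\ref{p:QuantSigma}'', and you have carried out that argument faithfully, substituting $\A$ for $\Sigma$, $\bigmeet$ for $\dbigwedge$, and Proposition~\ref{p:QuantA0} for the defining equation of $\dbigwedge$. No gaps.
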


\begin{proof}
  Same argument as for Prop.~\ref{p:QuantSigma}
  p.~\pageref{p:QuantSigma}.
\end{proof}

\subsection{Implication in~$\A$}
\label{ss:ImpA}
It now remains to show that the operation $({\to}):\A\times\A\to\A$ is
a code for implication in the sense of the generic predicate
$\tr_{\A}\in\P\A$.
For that, we first need to prove the following technical lemma:
\begin{lemma}\label{l:ImpTech}
  $\Bigsem{{\dbigwedge}
    \Bigl\{\bigl({\dbigwedge}s'\bigr)\dimp\xi~:~
    s'\in s^{\subseteq},~\xi\in t\Bigr\}}
  _{(s,t)\in\Pow(\Sigma)^2}=
  \Bigsem{{\dbigwedge}
    \Bigl\{\bigl({\dbigwedge}s\bigr)\dimp\xi~:~\xi\in t\Bigr\}}
  _{(s,t)\in\Pow(\Sigma)^2}$.
\end{lemma}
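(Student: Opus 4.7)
The plan is to prove the two inequalities separately, invoking the previously established properties of $\dbigwedge$ and $\dimp$: monotonicity w.r.t.\ the domain of quantification (Corollary~\ref{c:SubQuantSigma}), distributivity of $\dbigwedge$ over $\dimp$ on the right (Corollary~\ref{c:DistrForallImp}), and the merging of nested universal quantifications (Proposition~\ref{p:MergeQuantSigma}).

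For the easy direction LHS~$\le$~RHS, observe that for each $(s,t)\in\Pow(\Sigma)^2$ the element $s$ itself lies in $s^{\subseteq}$, so $\{(\dbigwedge s)\dimp\xi:\xi\in t\}\subseteq\{(\dbigwedge s')\dimp\xi:s'\in s^{\subseteq},\xi\in t\}$. Packaging this as two families $a,b\in\Pow(\Sigma)^{\Pow(\Sigma)^2}$ with $a(s,t)\subseteq b(s,t)$ pointwise, Corollary~\ref{c:SubQuantSigma} yields $\bigsem{\dbigwedge\circ b}\le\bigsem{\dbigwedge\circ a}$, which is exactly LHS~$\le$~RHS.

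For the reverse inequality RHS~$\le$~LHS, the strategy is to reduce both sides to a common simpler form by absorbing the quantification over $\xi\in t$ into the consequent of the implication. On the RHS, Corollary~\ref{c:DistrForallImp} applied with $X=\Pow(\Sigma)^2$, $\sigma_{(s,t)}=\dbigwedge s$ and $t_{(s,t)}=t$ gives $\text{RHS}=\bigsem{(\dbigwedge s)\dimp\dbigwedge t}_{(s,t)}$. On the LHS, I first rewrite the index set as $\{(\dbigwedge s')\dimp\xi:s'\in s^{\subseteq},\xi\in t\}=\bigcup_{s'\in s^{\subseteq}}\{(\dbigwedge s')\dimp\xi:\xi\in t\}$ and apply Proposition~\ref{p:MergeQuantSigma} to re-express the corresponding meet as $\dbigwedge\{\dbigwedge\{(\dbigwedge s')\dimp\xi:\xi\in t\}:s'\in s^{\subseteq}\}$; then Corollary~\ref{c:DistrForallImp} applied to the inner meet yields $\text{LHS}=\bigsem{\dbigwedge\{(\dbigwedge s')\dimp\dbigwedge t:s'\in s^{\subseteq}\}}_{(s,t)}$. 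It thus suffices to show, semantically in $\P(\Pow(\Sigma)^2)$, that $(\dbigwedge s)\dimp\dbigwedge t\le\dbigwedge\{(\dbigwedge s')\dimp\dbigwedge t:s'\in s^{\subseteq}\}$. For this, a further application of Corollary~\ref{c:SubQuantSigma}, after pulling back to the indexing set $\{(s',(s,t)):s\subseteq s'\}\subseteq\Pow(\Sigma)\times\Pow(\Sigma)^2$ with the two families $(s',(s,t))\mapsto s$ and $(s',(s,t))\mapsto s'$, gives $\sem{\dbigwedge s'}\le\sem{\dbigwedge s}$ whenever $s\subseteq s'$; antitonicity of the Heyting implication in its antecedent then produces $(\dbigwedge s)\dimp\dbigwedge t\le(\dbigwedge s')\dimp\dbigwedge t$, and a final application of Corollary~\ref{c:SubQuantSigma} (or direct use of the adjunction) packages this into the required meet inequality.

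The main difficulty is not any single step but the bookkeeping of auxiliary indexing sets (such as $\{(s',(s,t)):s\subseteq s'\}$) and the Beck--Chevalley squares implicit in each invocation of Proposition~\ref{p:MergeQuantSigma} and Corollary~\ref{c:DistrForallImp}: each individual rewrite is routine once the correct families and projections have been chosen, but threading them together coherently requires care. Once this is done, the final comparison reduces to the standard Heyting-algebra fact that implication is antitone in its left argument, applied inside the fibre $\P(\Pow(\Sigma)^2)$.
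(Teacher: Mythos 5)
Your proof is correct, but it takes a genuinely different route from the paper's. The paper works entirely in a single auxiliary fibre $\P{G}$ with $G=\{(s,t,s',\xi):s\subseteq s',\ \xi\in t\}$: both sides are written as $\forall\<g_1,g_2\>$ applied to an implication $\bigsem{{\dbigwedge}\circ g_i}_G\to\sem{g_4}_G$ (with $i=3$ for the left-hand side and $i=1$ for the right-hand side); the direction LHS $\le$ RHS is then obtained from the diagonal reindexing $g(s,t,s',\xi)=(s,t,s,\xi)$ together with the unit of the $\forall$-adjunction, and the converse from Corollary~\ref{c:SubQuantSigma} plus antitonicity of $\to$ and monotonicity of $\forall\<g_1,g_2\>$. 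Your treatment of LHS $\le$ RHS is actually slicker than the paper's: since the right-hand index set is contained pointwise in the left-hand one, Corollary~\ref{c:SubQuantSigma} applies directly to the outer meet and no adjunction argument is needed. The converse direction is where the two proofs really diverge: you first normalize both sides via Corollary~\ref{c:DistrForallImp} and Proposition~\ref{p:MergeQuantSigma} so as to absorb the quantification over $\xi\in t$ into a single consequent ${\dbigwedge}t$, and only then invoke antitonicity of implication; the paper avoids both of these rewrites by keeping $\xi$ as an explicit coordinate of~$G$. The price you pay is the reindexing bookkeeping you yourself flag: each replacement of a code by a semantically equal one underneath an outer ${\dbigwedge}$ must be justified by re-expressing that meet as a $\forall{h}$ over the auxiliary set $\{(s',(s,t)):s\subseteq s'\}$ via Proposition~\ref{p:QuantSigma}, and your final ``packaging'' step genuinely needs the adjunction $\P{h}(p)\le q\liff p\le\forall{h}(q)$ rather than another appeal to Corollary~\ref{c:SubQuantSigma}. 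Both arguments are sound; the paper's is shorter and self-contained in one fibre, while yours is more modular in its reliance on the previously established corollaries.
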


\begin{proof}
  Let us consider the set $G:=\{(s,t,s',\xi):s'\supseteq s,~\xi\in t\}
  \subseteq\Pow(\Sigma)\times\Pow(\Sigma)\times\Pow(\Sigma)\times\Sigma$
  equipped with the four projections
  $g_1,g_2,g_3:G\to\Pow(\Sigma)$, $g_4:G\to\Sigma$ and the function
  $g:G\to G$ defined by $g(s,t,s',\xi)=g(s,t,s,\xi)$ for all
  $(s,t,s',\xi)\in G$.
  We observe that
  $$\begin{array}{l@{~{}~}l}
    &\Bigsem{{\dbigwedge}
      \Bigl\{\bigl({\dbigwedge}s'\bigr)\dimp\xi~:~
      s'\in s^{\subseteq},~\xi\in t\Bigr\}}
    _{(s,t)\in\Pow(\Sigma)^2}\\
    =&\Bigsem{{\dbigwedge}
      \Bigl\{\bigl({\dbigwedge}g_3(z)\bigr)\dimp g_4(z)~:~
      z\in\<g_1,g_2\>^{-1}(s,t)\Bigr\}}_{(s,t)\in\Pow(\Sigma)^2}\\
    =&\forall\<g_1,g_2\>\Bigl(
    \bigsem{{\dbigwedge}\circ g_3}_G\to\sem{g_4}_G\Bigr)\\
  \end{array}$$
  whereas
  $$\begin{array}{l@{~{}~}l}
    &\Bigsem{{\dbigwedge}
      \Bigl\{\bigl({\dbigwedge}s\bigr)\dimp\xi~:~\xi\in t\Bigr\}}
    _{(s,t)\in\Pow(\Sigma)^2}\\
    =&\Bigsem{{\dbigwedge}
      \Bigl\{\bigl({\dbigwedge}g_1(z)\bigr)\dimp g_4(z)~:~
      z\in\<g_1,g_2\>^{-1}(s,t)\Bigr\}}_{(s,t)\in\Pow(\Sigma)^2}\\
    =&\forall\<g_1,g_2\>\Bigl(
    \bigsem{{\dbigwedge}\circ g_1}_G\to\sem{g_4}_G\Bigr)\\
  \end{array}$$
  So that we have to prove that\ \
  $\forall\<g_1,g_2\>\Bigl(
  \bigsem{{\dbigwedge}\circ g_3}_G\to\sem{g_4}_G\Bigr)~=~
  \forall\<g_1,g_2\>\Bigl(
  \bigsem{{\dbigwedge}\circ g_1}_G\to\sem{g_4}_G\Bigr)$.
  \smallbreak\noindent
  $({\le})$\quad We observe that
  $$\textstyle
  \P{g}\Bigl(\bigsem{{\dbigwedge}\circ g_3}_G\to\sem{g_4}_G\Bigr)
  ~=~\bigsem{{\dbigwedge}\circ g_3\circ g}_G\to\sem{g_4\circ g}_G
  ~=~\bigsem{{\dbigwedge}\circ g_1}_G\to\sem{g_4}_F\,,$$
  since $g_3\circ g=g_1$ and $g_4\circ g=g_4$.
  By right adjunction we thus get
  $$\begin{array}[b]{r@{~{}~}c@{~{}~}l}
    \bigsem{{\dbigwedge}\circ g_3}_G\to\sem{g_4}_G&\le&
    \forall{g}\Bigl(
    \bigsem{{\dbigwedge}\circ g_1}_G\to\sem{g_4}_G\Bigr) \\
    \forall\<g_1,g_2\>\Bigl(
    \bigsem{{\dbigwedge}\circ g_3}_G\to\sem{g_4}_G\Bigr)&\le&
    (\forall\<g_1,g_2\>\circ\forall{g})\Bigl(
    \bigsem{{\dbigwedge}\circ g_1}_G\to\sem{g_4}_G\Bigr)\\
    \forall\<g_1,g_2\>\Bigl(
    \bigsem{{\dbigwedge}\circ g_3}_G\to\sem{g_4}_G\Bigr)&\le&
    \forall\<g_1,g_2\>\Bigl(
    \bigsem{{\dbigwedge}\circ g_1}_G\to\sem{g_4}_G\Bigr)\,,\\
  \end{array}\leqno\begin{tabular}[b]{@{}l}
    hence\\[3pt]and thus\\
  \end{tabular}$$
  using the fact that
  $\<g_1,g_2\>\circ g=\<g_1\circ g,g_2\circ g\>=\<g_1,g_2\>$.
  \smallbreak\noindent
  $({\ge})$\quad From Coro.~\ref{c:SubQuantSigma}, we get
  $\bigsem{{\dbigwedge}\circ g_3}_G\le
  \bigsem{{\dbigwedge}\circ g_1}_G$
  (since $g_1(z)\subseteq g_3(z)$ for all $z\in G$).
  Hence\ \
  $\bigsem{{\dbigwedge}\circ g_3}_G\to\sem{g_4}_G~\ge~
  \bigsem{{\dbigwedge}\circ g_1}_G\to\sem{g_4}_G$,\ \
  and thus:
  $$\textstyle\forall\<g_1,g_2\>\Bigl(
  \bigsem{{\dbigwedge}\circ g_3}_G\to\sem{g_4}_G\Bigr)~\ge~
  \forall\<g_1,g_2\>\Bigl(
  \bigsem{{\dbigwedge}\circ g_1}_G\to\sem{g_4}_G\Bigr)\,.
  \eqno\mbox{\qedhere}$$
\end{proof}

We can now state the desired property:
\begin{proposition}\label{p:ImpA0}
  $\csem{a\to b}_{(a,b)\in\A^2}=
  \csem{\pi}_{\A^2}\to\csem{\pi'}_{\A^2}
  ~({\in}~\P(\A\times\A))$,\\
  writing $\pi,\pi':\A^2\to\A$ the two projections from~$\A^2$
  to~$\A$.
\end{proposition}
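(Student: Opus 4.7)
The plan is to reduce the claim to the technical Lemma~\ref{l:ImpTech} combined with Corollary~\ref{c:DistrForallImp} (distributivity of $\dbigwedge$ over $\dimp$), by unfolding the definitions of $\phi$, $\tphi_0$ and $a\to b$ and translating everything back into the original decoding $\sem{\_}$.

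First, by the commutative diagram $\csem{c}_X=\sem{\phi\circ c}_X$ and by Proposition~\ref{p:ConnSigma}, the right-hand side rewrites as
$\csem{\pi}_{\A^2}\to\csem{\pi'}_{\A^2}=\sem{\phi\circ\pi}_{\A^2}\to\sem{\phi\circ\pi'}_{\A^2}=\sem{\phi(a)\dimp\phi(b)}_{(a,b)\in\A^2}$,
while the left-hand side rewrites as $\sem{\phi(a\to b)}_{(a,b)\in\A^2}$. It therefore suffices to establish the single equality
$\sem{\phi(a\to b)}_{(a,b)\in\A^2}=\sem{\phi(a)\dimp\phi(b)}_{(a,b)\in\A^2}$ in $\P(\A\times\A)$.

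Second, I would unfold the definitions. By definition of $\phi$, $\tphi_0$, $\phi_0$ and the implication in $\A$,
$$\phi(a\to b)~=~{\dbigwedge}\bigl\{\phi_0(s\mapsto\beta):s\in\tphi_0(a)^{\subseteq},~\beta\in b\bigr\}~=~{\dbigwedge}\bigl\{({\dbigwedge}s)\dimp\phi_0(\beta):s\in\tphi_0(a)^{\subseteq},~\beta\in b\bigr\}.$$
Since $\tphi_0(b)=\{\phi_0(\beta):\beta\in b\}$, this set coincides with $\{({\dbigwedge}s)\dimp\xi:s\in\tphi_0(a)^{\subseteq},~\xi\in\tphi_0(b)\}$, so $\phi(a\to b)$ is obtained by composing the $\Sigma$-valued family in the left-hand side of Lemma~\ref{l:ImpTech} with the map $h:(a,b)\mapsto(\tphi_0(a),\tphi_0(b))$ from $\A\times\A$ to $\Pow(\Sigma)^2$.

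Third, I would pull back Lemma~\ref{l:ImpTech} along $h$ (via $\P{h}$ and naturality of $\sem{\_}$) to obtain
$$\sem{\phi(a\to b)}_{(a,b)\in\A^2}~=~\Bigsem{{\dbigwedge}\bigl\{({\dbigwedge}\tphi_0(a))\dimp\xi:\xi\in\tphi_0(b)\bigr\}}_{(a,b)\in\A^2}.$$
Finally, applying Corollary~\ref{c:DistrForallImp} to the families $\sigma_{(a,b)}:={\dbigwedge}\tphi_0(a)$ and $t_{(a,b)}:=\tphi_0(b)$ turns the right-hand side into $\sem{({\dbigwedge}\tphi_0(a))\dimp({\dbigwedge}\tphi_0(b))}_{(a,b)\in\A^2}=\sem{\phi(a)\dimp\phi(b)}_{(a,b)\in\A^2}$, which is exactly what was needed.

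The only non-routine step is the third one: one must recognize that the passage from ``$s\in\tphi_0(a)^{\subseteq}$'' to ``$s=\tphi_0(a)$'' inside the big meet is precisely what Lemma~\ref{l:ImpTech} provides, after reparametrising $(s,t)\in\Pow(\Sigma)^2$ by $(\tphi_0(a),\tphi_0(b))$. The rest is bookkeeping with $\phi$, $\tphi_0$ and the naturality of $\sem{\_}$.
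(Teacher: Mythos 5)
Your proof is correct and follows essentially the same route as the paper's: unfold $\phi(a\to b)$ into the big meet over $s\in\tphi_0(a)^{\subseteq}$, recognize it as the pullback of the left-hand family of Lemma~\ref{l:ImpTech} along $\tphi_0\times\tphi_0$, then apply Corollary~\ref{c:DistrForallImp} and Proposition~\ref{p:ConnSigma} to land on $\csem{\pi}_{\A^2}\to\csem{\pi'}_{\A^2}$. The only cosmetic difference is that the paper applies the distributivity step before pulling back along $\tphi_0\times\tphi_0$ while you apply the corollary after, which is immaterial since the corollary is exactly the pulled-back form of Proposition~\ref{p:DistrForallImp}.
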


\begin{proof}
  Indeed, we have:
  $$\begin{array}[b]{r@{~{}~}l@{~{}~}l@{\hskip-10mm}}
    \csem{a\to b}_{(a,b)\in\A^2}
    &=&\sem{\phi(a\to b)}_{(a,b)\in\A^2} \\
    &=&\Bigsem{\phi\Bigl(\bigl\{s'\mapsto\beta~:~
      s'\in\tphi_0(a)^{\subseteq},~\beta\in b\bigr\}\Bigr)}
    _{(a,b)\in\A^2} \\
    &=&\Bigsem{{\dbigwedge}
      \Bigl\{\bigl({\dbigwedge}s'\bigr)\dimp\xi~:~
      s'\in\tphi_0(a)^{\subseteq},~\xi\in\tphi_0(b)\Bigr\}}
    _{(a,b)\in\A^2} \\
    &=&\P(\tphi_0\times\tphi_0)\left(
    \Bigsem{{\dbigwedge}
      \Bigl\{\bigl({\dbigwedge}s'\bigr)\dimp\xi~:~
      s'\in s^{\subseteq},~\xi\in t\Bigr\}}
    _{(s,t)\in\Pow(\Sigma)^2}\right) \\
    &=&\P(\tphi_0\times\tphi_0)\left(
    \Bigsem{{\dbigwedge}
      \Bigl\{\bigl({\dbigwedge}s\bigr)\dimp\xi~:~\xi\in t\Bigr\}}
    _{(s,t)\in\Pow(\Sigma)^2}\right) \\
    &=&\P(\tphi_0\times\tphi_0)\left(
    \bigsem{\bigl({\dbigwedge}s\bigr)\dimp\bigl({\dbigwedge}t\bigr)}
    _{(s,t)\in\Pow(\Sigma)^2}\right) \\
    &=&\bigsem{\bigl({\dbigwedge}\tphi_0(a)\bigr)\dimp
      \bigl({\dbigwedge}\tphi_0(b)\bigr)}_{(a,b)\in\A^2}
    ~=~\sem{\phi(a)\dimp\phi(b)}_{(a,b)\in\A^2}\\
    &=&\sem{\phi\circ\pi}_{\A^2}\to\sem{\phi\circ\pi'}_{\A^2}
    ~=~\csem{\pi}_{\A^2}\to\csem{\pi'}_{\A^2} \\
  \end{array}\eqno\begin{tabular}[b]{r@{}}
  (by Lemma~\ref{l:ImpTech})\\[5pt]
  (by Coro.~\ref{c:DistrForallImp})\\[3pt]\\
  \mbox{\qedhere}\\
  \end{tabular}$$
\end{proof}

\begin{proposition}\label{p:ImpA}
  Let $X$ be a set.
  For all codes $a,b\in\A^X$, we have
  $$\textstyle
  \csem{a_x\to b_x}_{x\in X}~=~\csem{a}_X\to\csem{b}_X
  \eqno({\in}~\P{X})$$
\end{proposition}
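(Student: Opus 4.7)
The plan is to reduce the claim to the special case already handled in Proposition~\ref{p:ImpA0} by pulling back along the pairing map $\langle a,b\rangle:X\to\A\times\A$ defined by $x\mapsto(a_x,b_x)$. The key observation is that $\pi\circ\langle a,b\rangle=a$, $\pi'\circ\langle a,b\rangle=b$, and $({\to})\circ\langle a,b\rangle$ is precisely the family $(a_x\to b_x)_{x\in X}\in\A^X$, so everything on the right-hand side of the desired equation can be recovered by applying the substitution morphism $\P(\langle a,b\rangle):\P(\A\times\A)\to\P{X}$ to the identity proved in Proposition~\ref{p:ImpA0}.

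The concrete steps I would carry out are: First, invoke naturality of the decoding map $\csem{\_}_{(-)}:\A^{(-)}\to\P{(-)}$ (proved just as the analogous naturality for $\sem{\_}_X$ was in Section~\ref{ss:GenPred}, using $\tr_{\A}=\P\phi(\tr_\Sigma)$ and functoriality of $\P$) to get, for any $c\in\A^{\A\times\A}$, the identity $\P(\langle a,b\rangle)(\csem{c}_{\A^2})=\csem{c\circ\langle a,b\rangle}_X$. Applying this to $c=({\to})$, $c=\pi$, and $c=\pi'$ yields
$$\P(\langle a,b\rangle)\bigl(\csem{a'\to b'}_{(a',b')\in\A^2}\bigr)=\csem{a_x\to b_x}_{x\in X},\qquad \P(\langle a,b\rangle)(\csem{\pi}_{\A^2})=\csem{a}_X,\qquad \P(\langle a,b\rangle)(\csem{\pi'}_{\A^2})=\csem{b}_X.$$
Second, apply $\P(\langle a,b\rangle)$ to both sides of the equality of Proposition~\ref{p:ImpA0}, and use that $\P(\langle a,b\rangle)$ is a morphism of Heyting algebras and therefore commutes with $\to$:
$$\csem{a_x\to b_x}_{x\in X}=\P(\langle a,b\rangle)(\csem{\pi}_{\A^2})\to\P(\langle a,b\rangle)(\csem{\pi'}_{\A^2})=\csem{a}_X\to\csem{b}_X.$$

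There is no real obstacle here: once Proposition~\ref{p:ImpA0} is in hand, the present statement is a standard consequence of naturality of the decoding map combined with the fact that substitution is a Heyting morphism, exactly in the same spirit as how Proposition~\ref{p:ConnSigma} was deduced from the pointwise definitions of $\dand,\dor,\dimp$ on $\Sigma\times\Sigma$. The only minor point worth being explicit about is verifying the naturality of $\csem{\_}_X$, which follows directly from $\csem{c\circ f}_X=\P(c\circ f)(\tr_\A)=\P{f}(\P{c}(\tr_\A))=\P{f}(\csem{c}_Y)$ for any $f:X\to Y$ and $c\in\A^Y$.
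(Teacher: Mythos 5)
Your proof is correct and is exactly the argument the paper intends: the paper's proof of this proposition simply says ``same argument as for Proposition~\ref{p:ConnSigma}'', and that argument is precisely your reduction via the pairing map $\<a,b\>:X\to\A\times\A$, naturality of the decoding map, and the fact that $\P\<a,b\>$ is a Heyting morphism applied to Proposition~\ref{p:ImpA0}. No differences worth noting.
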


\begin{proof}
  Same argument as for Prop.~\ref{p:ConnSigma}
  p.~\pageref{p:ConnSigma}.
\end{proof}

\subsection{Defining the separator $S\subseteq\A$}
By analogy with the construction of the pseudo-filter
$\Phi\subseteq\Sigma$ (cf Section~\ref{ss:DefFilterSigma}), we let
$$S~:=~\bigl\{a\in\A~:~\csem{a}_{\_\in1}=\top_1\bigr\}
\eqno({\subseteq}~\A)$$
writing $\top_1$ the top element of $\P{1}$.
Note that by construction, we have
$$S~=~\bigl\{a\in\A~:~\sem{\phi(a)}_{\_\in1}=\top_1\bigr\}
~=~\bigl\{a\in\A~:~\phi(a)\in\Phi\bigr\}
~=~\phi^{-1}(\Phi)\,.$$

\begin{proposition}
  The subset $S\subseteq\A$ is a separator of the implicative
  structure $(\A,{\cle},{\to})$.
\end{proposition}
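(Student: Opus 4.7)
The plan is to verify the three conditions (1)--(3) defining a separator, using systematically the dictionary established in Sections~\ref{ss:QuantA} and~\ref{ss:ImpA}. Specifically, Propositions~\ref{p:ImpA} and~\ref{p:QuantA} say that $({\to}):\A^2\to\A$ and $({\bigmeet}):\Pow(\A)\to\A$ are codes for implication and universal quantification w.r.t.\ the generic predicate $\tr_{\A}\in\P\A$; since $S=\{a\in\A:\csem{a}_{\_\in 1}=\top_1\}$, every claim about membership in $S$ translates into a statement inside the Heyting algebra $\P(1)$.

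For condition~(1), if $a\cle a'$ (i.e.\ $a\supseteq a'$ in $\Powup(\A_0)$) then $\tphi_0(a)\supseteq\tphi_0(a')$ as subsets of $\Sigma$, so Corollary~\ref{c:SubQuantSigma} applied to the corresponding constant families indexed by~$1$ yields
\[
\csem{a}_{\_\in 1}
~=~\sem{\dbigwedge\tphi_0(a)}_{\_\in 1}
~\le~\sem{\dbigwedge\tphi_0(a')}_{\_\in 1}
~=~\csem{a'}_{\_\in 1},
\]
whence $a\in S$ implies $a'\in S$.

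For condition~(2), we treat $\mathbf{K}^{\A}=\bigmeet\{a\to b\to a:(a,b)\in\A^2\}$ in detail; $\mathbf{S}^{\A}$ is handled identically. Reading this meet as indexed by the unique map $1_{\A^2}:\A^2\to 1$, Proposition~\ref{p:QuantA} together with two applications of Proposition~\ref{p:ImpA} gives
\[
\csem{\mathbf{K}^{\A}}_{\_\in 1}
~=~\forall 1_{\A^2}\Bigl(\csem{\pi}_{\A^2}\to\csem{\pi'}_{\A^2}\to\csem{\pi}_{\A^2}\Bigr)
~=~\forall 1_{\A^2}(\top_{\A^2})~=~\top_1,
\]
using that $p\to q\to p=\top$ in any Heyting algebra and that $\forall f$ preserves $\top$ (by the lemma on adjoints in Section~\ref{ss:SetBasedTriposes}). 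The same reasoning applies to $\mathbf{S}^{\A}$, whose propositional skeleton $(a\to b\to c)\to(a\to b)\to a\to c$ is likewise an intuitionistic tautology.

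For condition~(3), suppose $(a\to b),a\in S$. Reading $a$ and $b$ as constant families in $\A^1$, Proposition~\ref{p:ImpA} yields $\csem{a\to b}_{\_\in 1}=\csem{a}_{\_\in 1}\to\csem{b}_{\_\in 1}=\top_1$, hence $\csem{a}_{\_\in 1}\le\csem{b}_{\_\in 1}$; combined with $\csem{a}_{\_\in 1}=\top_1$, this forces $\csem{b}_{\_\in 1}=\top_1$, i.e.\ $b\in S$. No step is genuinely hard; the only substantive observation is that the propositional shapes of K and S are intuitionistic tautologies, after which everything reduces to the coding machinery of the two preceding subsections.
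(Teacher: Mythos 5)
Your proof is correct and follows essentially the same route as the paper: upward closure via Corollary~\ref{c:SubQuantSigma}, membership of $\mathbf{K}^{\A}$ and $\mathbf{S}^{\A}$ by decoding the meet through Propositions~\ref{p:QuantA} and~\ref{p:ImpA} and using that the corresponding propositional shapes are Heyting-algebra tautologies, and modus ponens directly from Proposition~\ref{p:ImpA}. The only (immaterial) difference is that you collapse the quantification over $\A^2$ into a single $\forall 1_{\A^2}$, where the paper nests two $\forall\pi$'s.
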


\begin{proof}
  \emph{$S$ is upwards closed.}\quad
  Let $a,b\in\A$ such that $a\in S$ and $a\cle b$ (that is:
  $b\subseteq a$).
  From these assumptions, we have $\csem{a}_{\_\in1}=\top_1$
  and $\tphi_0(b)\subseteq\tphi_0(a)$, hence
  $$\textstyle\top_1~=~\csem{a}_{\_\in 1}
  ~=~\bigsem{{\dbigwedge}\tphi_0(a)}_{\_\in 1}
  ~=~\bigsem{{\dbigwedge}\circ(\tphi_0(a))_{\_\in 1}}_1
  ~\le~\bigsem{{\dbigwedge}\circ(\tphi_0(b))_{\_\in 1}}_1
  ~=~\csem{b}_{\_\in 1}$$
  (from Coro.~\ref{c:SubQuantSigma}) and thus
  $\csem{b}_{\_\in 1}=\top_1$.
  Therefore $b\in S$.
  \smallbreak\noindent
  \emph{$S$ contains $\mathbf{K}^{\A}$ and $\mathbf{S}^{\A}$}.\quad
  We observe that
  $$\begin{array}{r@{~{}~}c@{~{}~}l}
    \bigcsem{\mathbf{K}^{\A}}_{\_\in1}
    &=&\Bigcsem{\bigmeet_{a\in\A}\bigmeet_{b\in\B}
      (a\to b\to a)}_{\_\in1}\\[6pt]
    &=&\forall{\pi_{1,\A}}\Bigl(
    \Bigcsem{\bigmeet_{b\in\B}(a\to b\to a)}
    _{(\_\,,a)\in1\times\A}\Bigr)\\[6pt]
    &=&\forall{\pi_{1,\A}}\bigl(\forall{\pi_{1\times\A,\A}}\bigl(
    \bigcsem{a\to b\to a}
    _{((\_\,,a),b)\in(1\times\A)\times\A}\bigr)\bigr)\\[3pt]
    &=&\forall{\pi_{1,\A}}\bigl(\forall{\pi_{1\times\A,\A}}
    \bigl(\top_{(1\times\A)\times\A}\bigr)\bigr)
    ~=~\top_1\\
  \end{array}$$
  hence $\mathbf{K}^{\A}\in S$.
  The proof that  $\mathbf{S}^{\A}\in S$ is analogous.
  \smallbreak\noindent
  \emph{$S$ is closed under modus ponens}.\quad
  Suppose that $(a\to b)\in S$ and $a\in S$.
  This means that
  $\csem{a\to b}_{\_\in1}=
  \csem{a}_{\_\in 1}\to\csem{b}_{\_\in 1}=\top_1$ and
  $\csem{a}_{\_\in1}=\top_1$.
  Hence $\csem{b}_{\_\in1}=\top_1$, and thus $b\in S$.
\end{proof}

Similarly to Prop.~\ref{p:OrderPhi} p.~\pageref{p:OrderPhi}, the
following proposition characterizes the ordering on each Heyting
algebra $\P{X}$ from the operations of~$\A$ and the separator
$S\subseteq\A$:
\begin{proposition}\label{p:CharacSeparA}
  For all sets $X$ and for all codes $a,b\in\A^X$, we have:
  $$\csem{a}_X\le\csem{b}_X\quad\text{iff}\quad
  \bigmeet_{x\in X}(a_x\to b_x)~\in~S\,.$$
\end{proposition}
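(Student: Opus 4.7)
The plan is to carry out the exact same chain of equivalences used in the proof of Proposition~\ref{p:OrderPhi} on p.~\pageref{p:OrderPhi}, but this time working with the new decoding map $\csem{\_}_X : \A^X \to \P{X}$, the new generic predicate $\tr_{\A}\in\P\A$, and using the operations $(\to)$ and $\bigmeet$ of the implicative structure~$\A$ in place of $(\dimp)$ and $\dbigwedge$. The separator $S$ will play the role that $\Phi$ played there.

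More concretely, I would first rewrite $\csem{a}_X \le \csem{b}_X$ as $\top_X \le \csem{a}_X \to \csem{b}_X$, and then use Proposition~\ref{p:ImpA} to replace the right-hand side by the single code $\csem{a_x \to b_x}_{x\in X}$. Next, writing $1_X:X\to 1$ for the terminal map and using $\top_X = \P{1_X}(\top_1)$, the adjunction $\P{1_X}\dashv\forall{1_X}$ converts the inequality into $\top_1 \le \forall{1_X}\bigl(\csem{a_x\to b_x}_{x\in X}\bigr)$. Applying Proposition~\ref{p:QuantA} with $f=1_X$ (noting that $1_X^{-1}(\_) = X$) rewrites this right-hand side as $\bigcsem{\bigmeet_{x\in X}(a_x\to b_x)}_{\_\in 1}$. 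Finally, by the very definition of $S$, we have $\top_1 \le \csem{c}_{\_\in 1}$ iff $\csem{c}_{\_\in 1}=\top_1$ iff $c\in S$, taking $c := \bigmeet_{x\in X}(a_x\to b_x)$. Assembling these equivalences yields the statement.

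Laid out as a computation, the proof will look like
$$\begin{array}{r@{\quad}c@{\quad}l}
  \csem{a}_X\le\csem{b}_X
  &\text{iff}&\top_X\le\csem{a}_X\to\csem{b}_X \\
  &\text{iff}&\P{1_X}(\top_1)\le\csem{a_x\to b_x}_{x\in X} \\
  &\text{iff}&\top_1\le\forall{1_X}\bigl(\csem{a_x\to b_x}_{x\in X}\bigr) \\
  &\text{iff}&\top_1\le\bigcsem{\bigmeet_{x\in X}(a_x\to b_x)}_{\_\in 1} \\
  &\text{iff}&\bigmeet_{x\in X}(a_x\to b_x)\in S
\end{array}$$
the successive justifications being: the Heyting-algebra characterization of~$\le$; Proposition~\ref{p:ImpA}; the right adjunction $\P{1_X}\dashv\forall{1_X}$; Proposition~\ref{p:QuantA} applied to $1_X$; and the definition of~$S$.

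There is no real obstacle here, since each of the four ingredients (the implication-compatibility of $\csem{\_}$, the universal-quantification compatibility of $\csem{\_}$, the adjunction, and the definition of $S$ in terms of truth at the singleton) has already been established. The only point that requires a moment of care is checking that the right-hand side of Proposition~\ref{p:QuantA} instantiated at $Y=1$ and $f=1_X$ is indeed $\bigcsem{\bigmeet_{x\in X}(a_x\to b_x)}_{\_\in 1}$, which is immediate since $1_X^{-1}(\_)=X$ for the unique element $\_\in 1$.
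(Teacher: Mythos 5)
Your proof is correct and follows exactly the same chain of equivalences as the paper's own proof (Heyting-algebra characterization of $\le$, Prop.~\ref{p:ImpA}, the adjunction $\P{1_X}\dashv\forall{1_X}$, Prop.~\ref{p:QuantA} at $f=1_X$, and the definition of~$S$). Nothing to add.
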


\begin{proof}
  Writing $1_X:X\to 1$ the unique map from~$X$ to~$1$, we have:
  $$\begin{array}[b]{r@{\quad}c@{\quad}l}
    \csem{a}_X\le\csem{b}_X
    &\text{iff}&\top_X\le\csem{a}_X\to\csem{b}_X \\
    &\text{iff}&\P{1_X}(\top_1)\le\csem{a_x\to b_x}_{x\in X}\\
    &\text{iff}&\top_1\le\forall{1_X}
    \bigl(\csem{a_x\to b_x}_{x\in X}\bigr)\\
    &\text{iff}&\top_1\le\bigcsem{{\bigmeet}
      \{a_x\to b_x:x\in X\}}_{\_\in 1}\\
    &\text{iff}&\ds\bigmeet_{x\in X}(a_x\to b_x)~\in~S\,. \\
  \end{array}\eqno\mbox{\qedhere}$$
\end{proof}

\subsection{Constructing the isomorphism}
Let us now write $\P':\Set^{\op}\to\HA$ the tripos induced by the
implicative algebra $(\A,{\cle},{\to},S)$ (Section~\ref{ss:ImpAlg}).
Recall that for each set $X$, the Heyting algebra $\P'{X}:=\A^X/S[X]$
is the poset reflection of the preordered set $(\A^X,\ent_{S[X]})$
whose preorder $\ent_{S[X]}$ is given by
$$a\ent_{S[X]}b\quad\text{iff}\quad
\bigmeet_{x\in X}(a_x\to b_x)~\in~S
\eqno(\text{for all}~a,b\in\A^X)$$
It now remains to show that:
\begin{proposition}
  The implicative tripos~$\P'$ is isomorphic to the tripos~$\P$.
\end{proposition}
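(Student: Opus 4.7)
The plan is to build the natural isomorphism directly from the decoding map $\csem{\_}_X:\A^X\to\P X$, using Proposition~\ref{p:CharacSeparA} as the main engine. For each set $X$, the decoding map $\csem{\_}_X$ is surjective (since $\tr_{\A}\in\P\A$ is a generic predicate) and satisfies
$$\csem{a}_X\le\csem{b}_X\quad\liff\quad\bigmeet_{x\in X}(a_x\to b_x)\in S
\quad\liff\quad a\ent_{S[X]}b.$$
In particular $\csem{a}_X=\csem{b}_X$ whenever $a\tnent_{S[X]}b$, so $\csem{\_}_X$ factors through the quotient $\P'X=\A^X/S[X]$ to yield a well-defined map $\phi_X^{-1}:\P'X\to\P X$ sending $[a]_{S[X]}$ to $\csem{a}_X$. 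The same biconditional immediately gives both that $\phi_X^{-1}$ reflects order (hence is injective) and that it preserves order; combined with surjectivity this produces a poset isomorphism $\phi_X:\P X\to\P'X$ (taking the inverse), which is automatically an isomorphism of Heyting algebras by the remark following the definition of tripos isomorphism.

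Next I would verify naturality. Given $f:X\to Y$ and $[a]\in\P'Y$, the substitution in $\P'$ is defined so that $\P'f([a])=[a\circ f]$ (it is the unique Heyting morphism factoring $\A^f$ through the quotients). Then
$$\phi_X^{-1}(\P'f([a]))~=~\csem{a\circ f}_X~=~\P f(\csem{a}_Y)~=~\P f(\phi_Y^{-1}([a])),$$
where the middle equality is naturality of $\csem{\_}$ (which follows from naturality of $\sem{\_}$ applied to $\phi\circ a:X\to\Sigma$, since $\csem{a}_X=\sem{\phi\circ a}_X$ by the commuting diagrams of Section~3.3). Inverting gives the required naturality square for $\phi$.

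The only delicate point is checking that the codomain $\P'X$ really is ordered by $[a]\cle[b]\liff a\ent_{S[X]}b$, so that the equivalence above between $\csem{a}_X\le\csem{b}_X$ and $a\ent_{S[X]}b$ translates into an honest order isomorphism. This is by definition of the poset reflection used to build $\P'X$ from $(\A^X,\ent_{S[X]})$ in Section~\ref{ss:ImpAlg}, so no separate verification is required. Similarly, that $\P'f$ really is the map $[a]\mapsto[a\circ f]$ is part of the tripos construction from implicative algebras recalled at the end of Section~\ref{ss:ImpAlg}. I do not expect any real obstacle here: all the substantive content---the explicit codes for $\land,\lor,\to,\exists,\forall$ on $\A$, the characterization of the ordering via $S$, and the surjectivity of decoding---has been carefully arranged in Sections~3.3--3.7 so that this final step is essentially a packaging argument, with the only slightly less trivial part being the naturality computation above.
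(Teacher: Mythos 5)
Your proposal is correct and follows essentially the same route as the paper: both take the decoding map $\csem{\_}_X:\A^X\to\P{X}$, use Proposition~\ref{p:CharacSeparA} to factor it through the poset reflection $\P'X=\A^X/S[X]$ as an order embedding, and conclude by surjectivity (the paper merely asserts naturality as ``clear'' where you spell out the computation via $\csem{a\circ f}_X=\P{f}(\csem{a}_Y)$, and it keeps the isomorphism in the direction $\P'X\to\P{X}$ rather than inverting). No gaps.
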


\begin{proof}
  Let us consider the family of maps
  $\rho_X:=\csem{\_}_X:\A^X\to\P{X}$, which is clearly natural
  in the parameter set~$X$.
  From Prop.~\ref{p:CharacSeparA}, we have
  $$a\ent_{S[X]}b\quad\text{iff}\quad
  \bigmeet_{x\in X}(a_x\to b_x)~\in~S
  \quad\text{iff}\quad\rho_X(a)\le\rho_X(b)
  \eqno(\text{for all}~a,b\in\A^X)$$
  hence $\rho_X:\A^X\to\P{X}$ induces an embedding of
  posets $\tilde{\rho}_X:\P'{X}\to\P{X}$ through the quotient
  $\P'{X}:=\A^X/S[X]$.
  Moreover, the map $\tilde{\rho}_X:\P'{X}\to\P{X}$ is surjective
  (since $\rho_X$ is), therefore it is an isomorphism
  from the tripos~$\P'$ to the tripos~$\P$.
\end{proof}

The proof of Theorem~\ref{th:Thm} p.~\pageref{th:Thm} is now complete.

\subsection{The case of classical triposes}
In~\cite{Miq20}, we showed that each classical implicative tripos
(that is: a tripos induced by a classical implicative algebra~$\A$)
is isomorphic to a Krivine tripos (that is: a tripos induced by an
abstract Krivine structure).
Combining this result with Theorem~\ref{th:Thm}, we deduce that
classical realizability provides a complete description of all
classical triposes $\P:\Set^{\op}\to\BA$ (writing $\BA\subset\HA$ the
full subcategory of Boolean algebras):
\begin{corollary}
  Each classical tripos $\P:\Set^{\op}\to\BA$ is isomorphic to a
  Krivine tripos.
\end{corollary}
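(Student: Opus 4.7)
The plan is to combine Theorem~\ref{th:Thm} with the result recalled from~\cite{Miq20} stating that every classical implicative tripos is isomorphic to a Krivine tripos. The only real work is a bridge step: verifying that when one applies the construction of Section~\ref{s:ImpAlg} to a tripos $\P:\Set^{\op}\to\BA$ whose target is the category of Boolean algebras, the resulting implicative algebra $(\A,{\cle},{\to},S)$ is \emph{classical}, i.e.\ that $\mathbf{cc}^{\A}\in S$.

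First, I would apply Theorem~\ref{th:Thm} to a given classical tripos $\P:\Set^{\op}\to\BA$, viewed as a $\Set$-based tripos (composing with the inclusion $\BA\hookrightarrow\HA$). This yields an implicative algebra $(\A,{\cle},{\to},S)$ constructed exactly as in Section~\ref{s:ImpAlg} and a natural isomorphism between $\P$ and the induced implicative tripos $\P'$. Since $\P'{X}\cong\P{X}$ and each $\P{X}$ is a Boolean algebra, each $\P'{X}$ is Boolean as well.

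Next, I would verify classicality of the separator $S$. By definition $\mathbf{cc}^{\A}=\bigmeet_{a,b\in\A}(((a\to b)\to a)\to a)$, so using Propositions~\ref{p:QuantA} and~\ref{p:ImpA} iteratively,
$$\csem{\mathbf{cc}^{\A}}_{\_\in1}~=~\forall{\pi_{1,\A}}\Bigl(\forall{\pi_{1\times\A,\A}}\Bigl(\bigcsem{((a\to b)\to a)\to a}_{((\_\,,a),b)\in(1\times\A)\times\A}\Bigr)\Bigr)$$
and the inner decoding equals $((\csem{a}\to\csem{b})\to\csem{a})\to\csem{a}$ in $\P((1\times\A)\times\A)$. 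Because $\P$ takes values in $\BA$, Peirce's law holds pointwise, so this inner predicate is $\top_{(1\times\A)\times\A}$. Applying $\forall$ to $\top$ repeatedly gives $\top_1$, hence $\mathbf{cc}^{\A}\in S$ by definition of $S$. Therefore $(\A,{\cle},{\to},S)$ is a classical implicative algebra and $\P'$ is a classical implicative tripos.

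Finally, I would invoke the result from~\cite{Miq20} recalled above the statement: every classical implicative tripos is isomorphic to a Krivine tripos. Composing this isomorphism with the isomorphism $\P\cong\P'$ given by Theorem~\ref{th:Thm} yields the desired isomorphism of $\P$ with a Krivine tripos. The one mildly delicate point is the second step --- one must check that the very general construction of Section~\ref{s:ImpAlg}, applied to a Boolean tripos, automatically produces a classical separator --- but this follows immediately from the coding properties of $({\to})$ and $({\bigmeet})$ in $\A$ together with the fact that $\forall f$ preserves $\top$.
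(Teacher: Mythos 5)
Your proposal is correct and follows the same route as the paper, which simply combines Theorem~\ref{th:Thm} with the result from~\cite{Miq20} that classical implicative triposes are Krivine triposes. The bridge step you identify --- checking $\mathbf{cc}^{\A}\in S$ via Propositions~\ref{p:QuantA} and~\ref{p:ImpA}, Peirce's law in each Boolean algebra $\P{X}$, and preservation of $\top$ by $\forall{f}$ --- is left implicit in the paper, and your verification of it mirrors exactly the paper's own argument that $\mathbf{K}^{\A}\in S$, so it is the right way to fill that gap.
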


\subsection*{Acknowledgements}
We would like to thank \'Etienne Miquey and Luc Pellissier for their
useful remarks and corrections in an earlier version of this draft.

\bibliographystyle{plain}
\bibliography{paper}

\begin{thebibliography}{1}

\bibitem{Eng81}
E.~Engeler.
\newblock Algebras and combinators.
\newblock {\em Algebra Universalis}, 13(1):389--392, 1981.

\bibitem{HJP80}
J.~M.~E. Hyland, P.~T. Johnstone, and A.~M. Pitts.
\newblock Tripos theory.
\newblock In {\em Math. Proc. Cambridge Philos. Soc.}, volume~88, pages
  205--232, 1980.

\bibitem{Miq20}
A.~Miquel.
\newblock Implicative algebras: a new foundation for realizability and forcing.
\newblock {\em Math. Struct. Comput. Sci.}, 30(5):458--510, 2020.

\bibitem{Pit81}
A.~M. Pitts.
\newblock {\em The theory of triposes}.
\newblock PhD thesis, University of Cambridge, 1981.

\bibitem{Pit02}
A.~M. Pitts.
\newblock Tripos theory in retrospect.
\newblock {\em Math. Struct. Comput. Sci.}, 12(3):265--279, 2002.

\bibitem{Str13}
T.~Streicher.
\newblock Krivine's classical realisability from a categorical perspective.
\newblock {\em Math. Struct. Comput. Sci.}, 23(6):1234--1256, 2013.

\end{thebibliography}

\end{document}